\documentclass[11pt]{article}
\usepackage{latexsym,amsfonts,amssymb,amsmath,amsthm}
\usepackage{graphicx}
\usepackage{url}
\usepackage[usenames,dvipsnames]{color}
\usepackage{ulem,comment}
\usepackage{tcolorbox}
\usepackage[framed]{matlab-prettifier}
\usepackage{relsize,exscale}
\usepackage{bigints}
\usepackage{caption}
\usepackage{subcaption}
\usepackage{mathrsfs}

\newcommand{\redsout}{\bgroup\markoverwith{\textcolor{red}{\rule[0.5ex]{2pt}{0.4pt}}}\ULon}

\usepackage{color}
\usepackage{array}

\usepackage{float}
\usepackage{lipsum} 
\usepackage{fancyhdr}
\DeclareMathOperator*{\argmin}{argmin} 
\DeclareMathOperator*{\diag}{diag} 
\DeclareMathOperator*{\rank}{rank} 
\DeclareMathOperator*{\cov}{Cov}
\DeclareMathOperator*{\var}{Var}
\DeclareMathOperator*{\bias}{Bias}
\DeclareMathOperator*{\tr}{tr}

\begin{document}






\newtheorem{theorem}{Theorem}[section]
\newtheorem{proposition}[theorem]{Proposition}
\newtheorem{definition}{Definition}[section] 
\newtheorem{example}{Example}[section]
\newtheorem{exercise}{Exercise}[section]
\newtheorem{application}{Application}[section]
\newtheorem{homework}{Homework}
\newtheorem{practice}{Practice}
\newtheorem{corollary}{Corollary}[section]
\newtheorem{lemma}[theorem]{Lemma}
\newtheorem{assumption}[theorem]{Assumption}
\newtheorem{remark}{Remark}[section]
\newtheorem{notation}[theorem]{Notation}
\numberwithin{equation}{section}

\newcommand{\abs}[1]{\left\lvert #1 \right\rvert}
\newcommand{\sn}[2]{\left[ #1 \right]_{#2}}

\newcommand{\stk}[2]{\stackrel{#1}{#2}}
\newcommand{\dwn}[1]{{\scriptstyle #1}\downarrow}
\newcommand{\upa}[1]{{\scriptstyle #1}\uparrow}
\newcommand{\nea}[1]{{\scriptstyle #1}\nearrow}
\newcommand{\sea}[1]{\searrow {\scriptstyle #1}}
\newcommand{\csti}[3]{(#1+1) (#2)^{1/ (#1+1)} (#1)^{- #1
 / (#1+1)} (#3)^{ #1 / (#1 +1)}}
\newcommand{\RR}[1]{\mathbb{#1}}

\newcommand{\rd}{{\mathbb R^d}}
\newcommand{\ep}{\varepsilon}
\newcommand{\rr}{{\mathbb R}}
\newcommand{\alert}[1]{\fbox{#1}}
\newcommand{\eqd}{\sim}
\def\p{\partial}
\def\R{{\mathbb R}}
\def\N{{\mathbb N}}
\def\Q{{\mathbb Q}}
\def\C{{\mathbb C}}
\def\l{{\langle}}
\def\r{\rangle}
\def\t{\tau}
\def\k{\kappa}
\def\a{\alpha}
\def\la{\lambda}
\def\De{\Delta}
\def\de{\delta}
\def\ga{\gamma}
\def\Ga{\Gamma}
\def\ep{\varepsilon}
\def\eps{\varepsilon}
\def\si{\sigma}
\def\Re {{\rm Re}\,}
\def\Im {{\rm Im}\,}
\def\E{{\mathbb E}}
\def\P{{\mathbb P}}
\def\Z{{\mathbb Z}}
\def\D{{\mathbb D}}
\newcommand{\ceil}[1]{\lceil{#1}\rceil}

\title{{Well-posedness and numerical reconstruction of a source term for linear parabolic problems with an integral constraint}}

\author{
  Jason R. Morris\footnote{Department of Mathematics, State University of New York at Brockport, NY 14420 (jrmorris@brockport.edu)} \;and 
  Sedar Ngoma\footnote{Department of Mathematics, State University of New York at Geneseo, NY 14454 (ngoma@geneseo.edu)}
  }
\date{} 
\maketitle

\medskip\noindent{\bf Key words.} Inverse source problem; integral constraint; H\"{o}lder regularity; Tikhonov regularization; generalized singular value decomposition; Morozov discrepancy principle; parabolic equation; 

\medskip\noindent{\bf 2020 Mathematics Subject Classification.}
35R30, 35K20, 65M32, 65M60, 65J20.

\begin{abstract}
This work investigates a time-dependent source identification problem for
linear parabolic equations subject to an integral constraint and Neumann
boundary conditions in a domain of $\mathbb{R}^d$, $d\ge 1$. We establish
well-posedness and higher regularity of the solution pair in parabolic
H\"older spaces. A numerical algorithm based on a finite element
discretization in space and an implicit time-stepping scheme is then
developed for the reconstruction of the unknown source.

The resulting discrete inverse problem involves a well-conditioned
operator. We show that identity Tikhonov regularization provides only
uniform, nonselective shrinkage in this setting, whereas Tikhonov
regularization with derivative-based penalties, analyzed through the
generalized singular value decomposition, provides an effective
denoising strategy. The regularization parameter is selected using the
Morozov discrepancy principle. Numerical errors are evaluated using full
parabolic H\"older norms, which provide a more comprehensive assessment
of the reconstruction by incorporating errors in the solution, its
derivatives, and the associated H\"older seminorms. Numerical experiments
for smooth and piecewise constant sources demonstrate accurate and robust
reconstructions under increasing levels of noise.
\end{abstract}

\section{Introduction}
Let $\Omega$ be a domain in $\mathbb{R}^{d}$, $d\geq 1$ with boundary $\partial\Omega$ and let $T > 0$ be given. Denote by $Q_T = (0,T)\times\Omega$ the parabolic domain with lateral surface $S_T = (0,T)\times\partial\Omega$. Consider the parabolic problem
\begin{alignat}{2}
\partial_{t}{u}(t,x) + \mathscr{L}u(t,x) &= \phi(t,x), &&\quad (t,x) \in Q_T \label{parab1} \\
u(0,x) &= u_{0}(x), &&\quad x \in\Omega \label{icparab1} \\
\frac{\partial u(t,x)}{\partial \nu} &= g(t,x), &&\quad (t,x) \in S_T \label{bcparab1},
\end{alignat}
where $\nu$ is the outward unit normal to $\partial\Omega$ and $\mathscr{L}$ is the parabolic operator
\begin{equation}\label{operator1}
\mathscr{L}u(t,x) = -\sum_{i,j = 1}^da^{ij}(t,x)\partial_{x_i\,x_j}u(t,x) + \sum_{i=1}^db^i(t,x)\partial_{x_i} u(t,x) + c(t,x)u(t,x).
\end{equation}
Given the coefficients of $\mathscr{L}$, the source function $\phi(t,x)$, the initial condition $u_0(x)$, and the boundary function $g(t,x)$, problem~\eqref{parab1}--\eqref{bcparab1} is referred to as a direct or forward problem. It is widely used in various applications in science and technology, mathematics, physics, engineering, and many other fields. It describes the evolution in time of the density of some quantity $u(t,x)$ such as concentration of a chemical, heat transfer, etc, within the region $\Omega$. The first term on the right-hand side of Equation~\eqref{operator1} describes diffusion, the second quantity describes transport, and the third describes creation or depletion~\cite[p. 372-373]{Evans10}. On the other hand, given the coefficients of $\mathscr{L}$ together with $u_0$ and $g$, the problem of determining the pair $(u,\phi)$ in~\eqref{parab1}--\eqref{bcparab1} is called an inverse source problem. As such, the problem is under-determined and thus an additional condition on the solution $u$ of the direct problem is required in order to recover the source $\phi$. Several different types of additional conditions have been considered in the literature depending on whether the unknown function is either space-dependent or time-dependent or both. These additional conditions include but are not limited to the knowledge of the solution of the direct problem at the final time (final time measurement), knowledge of the integral of the solution of the direct problem, the availability of the solution of the direct problem at an interior spatial domain point, and the knowledge of the normal derivative of the direct problem at a boundary point. 

We consider a time-dependent inverse source problem corresponding to the equations
\begin{alignat}{2}
\partial_{t}{u}(t,x) + \mathscr{L}u(t,x) &= f(t,x) + \phi(t,x), &&\quad (t,x) \in Q_T \label{inv1} \\
u(0,x) &= u_{0}(x), &&\quad x \in\Omega \label{icinv1} \\
\frac{\partial u(t,x)}{\partial \nu} &= g(t,x), &&\quad (t,x) \in S_T \label{bcinv1}\\
\int_{\Omega}u(t,x)\,dx &= \mu(t). &&\quad t \in (0,T] \label{integparab1}
\end{alignat}

The inverse problem is as follows: given the component $\phi(t,x)$ of the source, the initial condition data $u_0(x)$, the boundary function $g(t,x)$, the integral constraint data $\mu(t)$, and the coefficients of the operator $\mathscr{L}$, determine a pair $(u,f)$ satisfying Equations~\eqref{inv1}--\eqref{integparab1}. So far, the problem formulation is underdetermined.  As we discuss this general form of the problem, we will refine the form of \(f(t,x)\) which in turn will lead to a well-posed problem.

The right-hand side of equation~\eqref{inv1} is a volume energy source term that may be due to electric heating, a nuclear source, or frictional heating in heat conduction. It measures the physical effect of an external heat source and thus represents the instantaneous temperature change due to an external heat source. It is also considered as the source pollutant in air pollution modeling. As an illustration, consider the case where a fertiliser from a field is carried by rain into a stream in a form of runoff. This in turn affects aquatic life. Thus, the estimation of this unknown inhomogeneous source term plays an important role in the migration of ground water and environment protection in cities with large populations~\cite{hazanee2013reconstruction}. 

The inverse source problem in the general case where the source depends on both the time and space variables has been considered in the literature but has the disadvantage of the lack of uniqueness results~\cite{Isakov2006}. This issue has led many researchers to consider sources as a linear combination of point sources, or as additive, or multiplicative in various forms~\cite{hasanov2013analysis,hazanee2013reconstruction,hazanee2016reconstruction}.  In particular, there are multiple practical applications for the multiplicative case $f(t,x)=h(t)w(t,x)$, where $w$ is known and $h$ is an unknown time-dependent scaling factor. These types of source functions arise in several physical applications such as heat processes involving radioisotope decay. They also play a role of a control term for the heat equation. 

The initial and boundary conditions are prescribed to ensure the well-posedness of the forward problem for \(u\) in~\eqref{parab1}--\eqref{bcparab1}, given \(\phi\) and the coefficients of $\mathscr{L}$.  The Neumann boundary conditions state that there is a given flux at the surface of the body under study, represented in this case by the boundary $\partial{\Omega}$. If $g=0$, the homogeneous Neumann boundary conditions express that the boundary $\partial{\Omega}$ is insulated. 

Meanwhile, the integral data~\eqref{integparab1}, perhaps available from measurement, makes it possible to recover a source term adjustment \(f(t,x)\) and the corresponding solution $u(t,x)$ without prior knowledge of either of them.
When \(f\) is treated as an unknown, there are many solution pairs $(u,f)$ to the problem~\eqref{inv1}--\eqref{bcinv1}.  Therefore, additional constraints are expected.  To obtain a unique solution pair, consider the effect of the integral constraint~\eqref{integparab1}. If $u(t,x)$ represents the temperature, then $\mu(t)$ can be interpreted as a measurement of thermal energy throughout \(\Omega\) for each time \(t\). It is reasonable to expect a suitable source adjustment $f(t,x)$ to produce the heat measurement $\mu$. We note that the use of additional data~\eqref{integparab1} is encountered in heat transfer applications~\cite{cannon1986diffusion}. It is also interpreted as the bulk age measurement of a sample in geochronology~\cite{GlotovHamesMeirNgoma2}.

Even with the integral constraint~\eqref{integparab1}, the problem still admits many solution pairs $(u,f)$.  Therefore, it is reasonable to seek solutions of particular forms. In this paper, our approach is to construct solutions of the form $u(t,x)=v(t,x)+h(t)w(t,x)$, for some given function $w(t,x)$, where \(v\) is the solution of the forward problem~\eqref{parab1}--\eqref{bcparab1}. This choice corresponds to using \[f(t,x)=h'(t)w(t,x)+h(t)\partial_t w(t,x)+h(t)\mathscr{L}w(t,x)\] and introducing an additional boundary term $h(t) \frac{\partial w(t,x)}{\partial \nu}$. Since $w$ is given, we regard \(u\) and \(h\) as the unknowns in the inverse problem. These statements are formalized in Theorem~\ref{existence-uniqueness}, in which we prove the existence of a unique solution pair $(u,h)$ in the setting of spaces of H\"older continuous functions. Moreover, though 
the terms $f$ and $g$ in~\eqref{inv1} and~\eqref{bcinv1} have been replaced by more complicated expressions, we explain in
Remark~\ref{simpler-forms} how the simpler expressions may be recovered, depending on features of the function $w$.

Related results appear in the literature.  For example,  
Hazanee, Ismailov, Lesnic, and Kerimov~\cite{HazaneeEtAl13} considered a time-dependent
inverse source problem for the heat conduction equation with the integral constraint
\eqref{integparab1} and nonlocal boundary conditions in one space dimension. The model is
of the form
\[
\partial_t u = \partial_{xx}u + r(t)f(x,t),
\]
where the function $f$ is known and the scalar function $r$ is to be determined from the
thermal energy measurement $\mu(t)$. The authors proved existence, uniqueness, and
continuous dependence of solutions using spectral methods based on separation of
variables. Moreover, they provided numerical solutions using the boundary element method
combined with Tikhonov regularization in order to obtain stable reconstructions.

Ginder~\cite{Ginder2010} investigated the model
\[
\partial_t u - \Delta u + h(x,t) = \lambda(t,u),
\]
subject to Dirichlet boundary conditions and the data $\mu(t)$ in a bounded domain of
$\mathbb{R}^d$, $d\ge 1$. The author proved existence and regularity of the solution pair
$(u,\lambda)$ using the discrete Morse flow, an approach also known as a combination of
Rothe’s time-discretization method and the direct method of the calculus of variations.
Glotov, Hames, Meir, and Ngoma~\cite{GlotovHamesMeirNgoma1} studied the model
\[
\partial_t u - \Delta u = f(t),
\]
subject to Dirichlet boundary conditions and the integral constraint $\mu(t)$ in a bounded
domain of $\mathbb{R}^d$, $d\ge 1$. They proved existence and uniqueness of the solution
pair $(u,f)$ using Rothe’s method together with an energy argument, and provided numerical
results based on a finite element discretization in space and a backward Euler method in
time.

A related inverse source problem involving internal measurements and Dirichlet boundary
conditions in one space dimension was considered by Yang, Dehghan, Yu, and
Luo~\cite{YangEtAl2011}. The authors employed Green’s function techniques and the
contraction mapping principle to establish existence and uniqueness of solutions, and then
applied the Landweber iteration method to compute numerical reconstructions. Other inverse
source problems with various types of additional data have been investigated by Isakov,
Rundell, Prilepko, Orlovsky, and Vasin~\cite{Isakov2006, prilepko2000methods, Rundell1980}.
An inverse scattering problem related to source identification was studied by
Kedzierawski~\cite{Kedzierawski1993}.

Ngoma~\cite{ngoma2024well} established the well-posedness of the inverse problem
\eqref{inv1}--\eqref{integparab1}, together with higher regularity of the solution pair
$(u,h)$ in parabolic H\"older spaces, in the case where the zeroth-order coefficient
satisfies $c(t,x)=0$ in the operator $\mathscr{L}$ defined in~\eqref{operator1}, and using  \(f(t,x) = f(t)\) in~\eqref{inv1}  (or equivalently, using \(w(t,x)\equiv 1\) in the above discussion).  In addition,
numerical results were presented using a finite element discretization in space and a
backward Euler scheme in time. It was shown that the recovery of the source function
can be reduced to a Volterra integral equation of the first kind, which is ill-posed
\cite{DamirchiEtAl19}. A collocation method was then used to derive a system of linear
equations, and Tikhonov regularization combined with the Morozov discrepancy principle was
employed to obtain stable reconstructions.

Unlike most of the above results, which primarily address the Laplace operator either in one space
dimension or in higher dimensions with time-independent coefficients, the present work
considers a more general parabolic operator~\eqref{operator1} in dimension $d\ge 1$, with
coefficients depending on both space and time variables.

In this work, we extend the results of~\cite{ngoma2024well} by including a nonzero
lower-order coefficient $c(t,x)$ in the operator $\mathscr{L}$ defined in~\eqref{operator1} as well as the fixed \((t,x)\)-dependent factor \(w(t,x)\) in the source term. We
establish well-posedness and higher regularity of solutions in parabolic H\"older spaces.
Furthermore, we present two numerical examples using finite element discretizations in
space: one involving the recovery of a smooth source function and another
involving the recovery of a piecewise constant source with jump discontinuities.

Since the resulting linear system involves a well-conditioned operator, we also establish
theoretically and illustrate numerically the ineffectiveness of identity Tikhonov regularization
in this setting. On the other hand, for the same well-conditioned operator, we
show both theoretically and numerically that Tikhonov regularization combined with the
Generalized Singular Value Decomposition (GSVD) provides an effective denoising strategy.

Unlike in~\cite{ngoma2024well}, where reconstruction errors were measured using the root
mean square error, the present work evaluates all reconstruction errors using full
parabolic H\"older norms. This choice provides a significantly more rigorous assessment
of accuracy, as these norms incorporate not only pointwise errors in the reconstructed
functions, but also errors in their temporal and spatial derivatives together with
corresponding H\"older seminorms. As a result, the reported errors capture fine-scale
oscillations and derivative sensitivity induced by noise and discretization, offering a
conservative and robust measure of reconstruction quality.

The remainder of the paper is organized as follows.
Section~\ref{notation-assumptions} introduces the notation, assumptions,
parabolic H\"older spaces, and auxiliary results used throughout the
analysis. In Section~\ref{existence_uniqueness}, we establish the existence
and uniqueness of the inverse solution by reducing the problem to the
corresponding forward parabolic problem and deriving an explicit
reconstruction formula for the unknown source function. 
Section~\ref{continous_dependence} establishes continuous dependence of the
solution pair on the prescribed data, thereby completing the well-posedness
analysis in the relevant parabolic H\"older spaces.
In Section~\ref{higher_regularity}, we investigate higher-order regularity
and show how additional regularity of the coefficients, forcing term,
integral data, and auxiliary function $w$ is inherited by the reconstructed
solution pair.

Section~\ref{regularization} turns to the effect of regularization on the
discrete inverse problem. We first analyze why identity Tikhonov
regularization is ineffective when the resulting operator is perfectly
conditioned, and then show, using the generalized singular value
decomposition, how derivative-based regularization can nevertheless provide
effective denoising by selectively penalizing prescribed modes.
Finally, Section~\ref{numerical_results} develops the finite element and
time-discretization scheme and presents numerical experiments for both
smooth and piecewise constant source functions. The experiments illustrate
the theoretical findings and assess the reconstructions under increasing
levels of noise using full parabolic H\"older norms.

\section{Notation, Assumptions, and Preliminaries}\label{notation-assumptions}
\subsection{Notation}
\begin{definition}\label{def1}
Assume that $\Omega\subset\mathbb{R}^d$, $d\ge 1$, $\delta\in (0,1)$, and $k = 0, 1, 2,\cdots$. 
\begin{description}
\item[(i)] If $u:\Omega\to\mathbb{R}$ is bounded and continuous, we write $\mathlarger{|u|_{0;\;\Omega} = [u]_{0; \Omega} = \sup_{x\in \Omega}\abs{u(x)}}$
\item[(ii)] The $\delta^{th}$ H\"{o}lder seminorm of $u:\Omega\to\mathbb{R}$ is 
\[
\left[u\right]_{\delta;\;\Omega} = \mathop{\sup_{x\ne y}}_{x, y\in \Omega}\frac{|u(x) - u(y)|}{|x - y|^{\delta}},
\]
and the $\delta^{th}$ H\"{o}lder norm is $|u|_{\delta;\;\Omega} = |u|_{0;\;\Omega} + \left[u\right]_{\delta;\;\Omega}$.

The H\"{o}lder space $C^{\delta}(\Omega)$ consists of all functions $u\in C(\overline{\Omega})$ for which $|u|_{\delta;\;\Omega}<\infty$.
\item[(iii)] We set
\[
[u]_{k; \Omega} = \max_{|\beta| = k}|D^{\beta}u|_{0; \Omega},\quad |u|_{k;\Omega} = \sum_{j = 0}^k[u]_{j;\Omega},\quad [u]_{k+\delta; \Omega} = \max_{|\beta| = k}|D^{\beta}u|_{\delta; \Omega}.
\]
The H\"{o}lder space $C^{k+\delta}(\Omega)$ is the Banach space of all functions $u\in C^k(\Omega)$ for which the norm
\[
|u|_{k+\delta;\;\Omega} = |u|_{k;\Omega} + [u]_{k+\delta;\;\Omega}<\infty.
\]
\end{description}
\end{definition}

\begin{definition}\label{def2}
A point in $\mathbb{R}^{d+1}$ will be denoted by $z=(t,x)$. Define the parabolic distance between the points $z_1 = (t,x)$ and $z_2 = (s,y)$ as 
\[
\rho(z_1,z_2) = |x - y| + |t - s|^{1/2}.
\]
Fix $\delta\in (0,1)$. If $u\in Q_T\subset \mathbb{R}^{d+1}$, we define
\begin{align*}
|u|_{0;\;Q_T} & = \sup_{z\in Q_T}\abs{u(z)},\\
\left[u\right]_{\delta/2,\,\delta;\;Q_T} &= \mathop{\sup_{z_1\ne z_2}}_{z_i\in Q_T}\frac{|u(z_1) - u(z_2)|}{\rho^{\delta}(z_1,z_2)},\\
|u|_{\delta/2,\;\delta;\;Q_T} &= |u|_{0;\;Q_T} + \left[u\right]_{\delta/2,\,\delta;\;Q_T}.
\end{align*}
We denote by $C^{\delta/2,\,\delta}(Q_T)$ the H\"{o}lder space of all functions $u$ for which $|u|_{\delta/2,\;\delta;\;Q_T}<\infty$. For a real-valued function $u(z)$ defined in $Q_T$, let
\begin{align}
    \left[ u\right]_{1+\delta/2,\,2 + \delta;\,Q_T} &:=\left[\partial_tu\right]_{\delta/2,\,\delta,\,Q_T} + \sum_{i,j = 1}^d\left[ \partial_{x_ix_j}u\right]_{\delta/2,\,\delta;\,Q_T}, \notag\\
    |u|_{1+\delta/2,\,2 + \delta;\,Q_T} &:= |u|_{0;\,Q_T} + \sum_{i = 1}^d|\partial_{x_i}u|_{0;\,Q_T} + |\partial_tu|_{0;\,Q_T} + \sum_{i,j = 1}^d\left|\partial_{x_ix_j}u\right|_{0;\,Q_T} + \left[ u\right]_{1+\delta/2,\,2 + \delta;\,Q_T}.\label{full_norm}
\end{align}
The parabolic H\"{o}lder space $C^{1+\delta/2,\,2+\delta}(Q_T)$ is the set of real-valued functions $u(z)$ defined in $Q_T$ for which both $\left[ u\right]_{1+\delta/2,\,2 + \delta;\,Q_T}<\infty$ and $|u|_{1+\delta/2,\,2 + \delta;\,Q_T}<\infty$.

\end{definition}
For more on H\"{o}lder spaces, see, for example,~\cite{Friedman08, Gilbarg-Trudinger96, Krylov96, Ladyzhenskaia88}.

\subsection{Assumptions}\label{assumption}
We assume the following about the data
\begin{description}
\item [$(A1)$]  The coefficients $a^{ij}, b^i ,c\in C^{\delta/2,\delta}(\overline{Q}_T)$. Moreover, there are constants $\nu, \Lambda$ such that
\[
\nu |\xi|^2\le\sum_{i,j=1}^{d}a^{ij}(t,x)\xi_i\xi_j\le\Lambda |\xi|^2,\; \text{for all $\xi\in\mathbb{R}^d$, $(t,x)\in\overline{Q}_T$}
\]

\item [$(A2)$] The domain $\Omega$ is a bounded $C^{2+\delta}$ domain. The boundary $\partial\Omega$ of $\Omega$ is of class $C^{2 + \delta}$. The final time $T>0$ is finite.
\item [$(A3)$] The functions $\phi\in C^{\delta/2,\delta}(\overline{Q}_T)$,  $u_0\in C^{2 + \delta}(\overline{\Omega})$, $g \in C^{(1+\delta)/2, 1+\delta}(\overline{S}_T)$, and they satisfy the compatibility condition of order $\lfloor (1+\delta)/2\rfloor = 0$: $g(x,0) = \frac{\partial u_0(x)}{\partial \nu}, x\in\partial\Omega$, as stated in~\cite[Chapter 4]{Ladyzhenskaia88}, where $\lfloor\cdot\rfloor$ denotes the floor function.
\item [$(A4)$] The function $\mu \in C^{1+\delta/2}[0,T]$ and we assume the compatibility condition $\mu(0)=\int_\Omega u_0(x) dx$.
\end{description}

\subsection{Preliminaries}\label{prelim}
In this section we present several results that will be used in subsequent sections. The first result is a restatement of a more general theorem taken from~\cite[Theorem 5.3 pp. 320-321]{Ladyzhenskaia88} that deals with the well-posedness of the forward problem~\eqref{parab1}--\eqref{bcparab1} and that will be needed in the next section.
\begin{lemma}\label{keyTheorem}
Suppose $\delta\in (0,1)$ and Assumptions $(A1)$-$(A2)$ hold. Then for any $\phi, u_0, g$ satisfying Assumption $(A3)$, 
the forward problem~\eqref{parab1}--\eqref{bcparab1} has a unique solution $u\in C^{1 + \delta/2, 2+\delta}(\overline{Q}_T)$, with
\begin{equation*}
    \left|u\right|_{1+\delta/2,2+\delta;Q_T}\le C\left(\left|\phi\right|_{\delta/2, \delta;Q_T} + \left|u_0\right|_{2+\delta; \Omega} + \left|g\right|_{(1+\delta)/2, 1+\delta; S_T}\right),
\end{equation*}
where $C>0$ is a constant not depending on $\phi, u_0$ and $g$.
\end{lemma}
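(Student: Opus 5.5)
Lemma~\ref{keyTheorem} is a specialization of the general Schauder theory for the oblique-derivative (here Neumann) problem in \cite[Theorem 5.3, Chapter IV]{Ladyzhenskaia88}. The plan is therefore to verify its hypotheses, not to reprove it. That theorem treats
\[
\partial_t u - \sum a^{ij}\partial_{x_ix_j}u + \sum a^i\partial_{x_i}u + a u = f
\]
in $Q_T$, with $u(0,\cdot)=\varphi_0$ and a boundary operator $\sum_i \beta^i\partial_{x_i}u + \beta u = \Phi$ on $S_T$. It asserts unique solvability in $C^{1+\ell/2,2+\ell}(\overline{Q}_T)$ for non-integer $\ell>0$, with the estimate
\[
|u|^{(2+\ell)}_{Q_T}\le C\bigl(|f|^{(\ell)}_{Q_T}+|\varphi_0|^{(2+\ell)}_{\Omega}+|\Phi|^{(1+\ell)}_{S_T}\bigr).
\]
I will take $\ell=\delta\in(0,1)$, $f=\phi$, $\varphi_0=u_0$, $\Phi=g$, $a^i=b^i$, $a=c$, $\beta^i=\nu_i$ (the components of the outward normal, extended smoothly near $\partial\Omega$) and $\beta=0$.

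I will then check the hypotheses one by one.
\begin{enumerate}
\item \emph{Uniform parabolicity and coefficient regularity.} These follow from $(A1)$. The bounds $\nu|\xi|^2\le a^{ij}\xi_i\xi_j\le\Lambda|\xi|^2$ hold, and $a^{ij},b^i,c\in C^{\delta/2,\delta}(\overline{Q}_T)$, which is the class $H^{\ell/2,\ell}$ required there.
\item \emph{Boundary regularity.} By $(A2)$, $\partial\Omega\in C^{2+\delta}$. Hence the unit normal $\nu$ lies in $C^{1+\delta}(\partial\Omega)$ and is time independent, so $\beta^i=\nu_i\in C^{(1+\delta)/2,1+\delta}(\overline{S}_T)$. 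This is exactly the regularity Ladyzhenskaya requires of the boundary coefficients for $\ell=\delta$.
\item \emph{Non-tangentiality.} Since $\sum_i\beta^i\nu_i=|\nu|^2=1>0$ on $S_T$, the boundary operator is uniformly non-tangential.
\item \emph{Data regularity.} $(A3)$ gives $\phi\in C^{\delta/2,\delta}$, $u_0\in C^{2+\delta}(\overline\Omega)$ and $g\in C^{(1+\delta)/2,1+\delta}(\overline S_T)$.
\item \emph{Compatibility.} For $\ell=\delta<1$ the order of compatibility is $\lfloor(1+\delta)/2\rfloor=0$. The only condition needed is therefore $\partial u_0/\partial\nu=g(0,\cdot)$ on $\partial\Omega$, which $(A3)$ assumes.
\end{enumerate}
With all hypotheses verified, the cited theorem yields existence, uniqueness and the a priori estimate. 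The constant $C$ depends only on $d,\delta,\nu,\Lambda,T,\Omega$ and the Hölder norms of the coefficients, not on $\phi,u_0,g$.

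The one genuinely delicate point is matching norms. Ladyzhenskaya's norm $|u|^{(2+\ell)}_{Q_T}$ includes all mixed derivatives $D_t^rD_x^s$ with $2r+s\le 2$, their Hölder seminorms in the parabolic metric, and also the $t$-Hölder seminorm of order $(1+\ell)/2$ of $\partial_x u$. It therefore dominates $|u|_{1+\delta/2,2+\delta;Q_T}$ as defined in \eqref{full_norm}, which only retains some of these terms. The seminorms in Definition~\ref{def2} use $\rho=|x-y|+|t-s|^{1/2}$, which is equivalent to the separate $x$- and $t$-Hölder quotients. The norms on the right-hand side coincide up to constants in the same way. So the estimate transfers with a possibly larger constant $C$.

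A possible secondary obstacle is a sign or growth condition on $c$ or a smallness condition on $T$. In Ladyzhenskaya's linear theory neither is needed on a finite interval, because one can substitute $u=e^{\lambda t}\tilde u$ to make the zeroth-order coefficient positive. I will remark on this in passing rather than rely on it.
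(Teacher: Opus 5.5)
Your proposal is correct and follows the same route as the paper, which gives no argument of its own and simply presents the lemma as a restatement of \cite[Theorem 5.3, pp.~320--321]{Ladyzhenskaia88}. Your checks make explicit what the paper leaves implicit: the coefficient and boundary regularity, the non-tangentiality $\sum_i \nu_i\nu_i = 1$, the zeroth-order compatibility $\partial u_0/\partial\nu = g(0,\cdot)$, and the comparison of the paper's parabolic H\"{o}lder norms with Ladyzhenskaya's.
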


The second result is the following lemma that will be useful in establishing estimates leading to the proof of the mathematical continuous dependence of the solutions on the data.
\begin{lemma}\label{norm_equiv}
Let $u$ be a function in a convex or cylindrical domain $Q_T = \Omega\times (0,T)\subset\mathbb{R}^{d+1}$, and let $\delta\in (0,1)$.
Define
\[
\left[u \right]'_{\delta/2, \delta;Q_T} = \mathop{\sup_{t\ne s}}_{(t,x),(s,x)\in Q_T}\frac{\left|u(t,x) - u(s,x) \right|}{|t-s|^{\delta/2}} + \mathop{\sup_{x\ne y}}_{(t,x),(t,y)\in Q_T}\frac{\left|u(t,x) - u(t,y) \right|}{|x-y|^{\delta}}.
\]
Then $\left[u \right]'_{\delta/2, \delta;Q_T}\le 2\left[u \right]_{\delta/2, \delta;Q_T}$, where $\left[u \right]_{\delta/2, \delta;Q_T}$ is defined in Definition~\ref{def2}.
\end{lemma}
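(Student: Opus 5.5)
The inequality in Lemma~\ref{norm_equiv} follows by bounding each of the two suprema in $\left[u\right]'_{\delta/2,\delta;Q_T}$ separately by $\left[u\right]_{\delta/2,\delta;Q_T}$, and then adding. Both suprema are taken over particular pairs of points $z_1\ne z_2$ in $Q_T$. Each such quotient can be compared directly with the full parabolic quotient $|u(z_1)-u(z_2)|/\rho^{\delta}(z_1,z_2)$ that appears in Definition~\ref{def2}.

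\textbf{The time quotient.} Take $z_1=(t,x)$ and $z_2=(s,x)$ in $Q_T$ with $t\ne s$. Then
\[
\rho(z_1,z_2)=|x-x|+|t-s|^{1/2}=|t-s|^{1/2},
\]
so $\rho^{\delta}(z_1,z_2)=|t-s|^{\delta/2}$. Hence
\[
\frac{|u(t,x)-u(s,x)|}{|t-s|^{\delta/2}}=\frac{|u(z_1)-u(z_2)|}{\rho^{\delta}(z_1,z_2)}\le \left[u\right]_{\delta/2,\delta;Q_T}.
\]
Taking the supremum over all admissible pairs bounds the first term of $\left[u\right]'_{\delta/2,\delta;Q_T}$ by $\left[u\right]_{\delta/2,\delta;Q_T}$.

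\textbf{The space quotient.} Take $z_1=(t,x)$ and $z_2=(t,y)$ with $x\ne y$. Then $\rho(z_1,z_2)=|x-y|$, so $\rho^{\delta}(z_1,z_2)=|x-y|^{\delta}$. The same argument bounds the second term by $\left[u\right]_{\delta/2,\delta;Q_T}$.

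Adding the two bounds gives $\left[u\right]'_{\delta/2,\delta;Q_T}\le 2\left[u\right]_{\delta/2,\delta;Q_T}$.

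There is no real obstacle. The only point to check is that the pairs used in $\left[u\right]'$ are themselves admissible in the supremum defining $\left[u\right]_{\delta/2,\delta;Q_T}$. They are, because $(t,x)$ and $(s,x)$, respectively $(t,x)$ and $(t,y)$, are assumed to lie in $Q_T$ and are distinct.

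The convexity or cylindrical hypothesis is not needed for this direction. It matters only for the reverse inequality $\left[u\right]\le C\left[u\right]'$. That inequality is proved by inserting the intermediate point $(s,x)$ (or $(t,y)$), which must lie in $Q_T$, and using
\[
a^{\delta}+b^{\delta}\le 2^{1-\delta}(a+b)^{\delta}
\quad\text{with } a=|x-y|,\ b=|t-s|^{1/2}.
\]
I would mention this reverse direction only as a remark.
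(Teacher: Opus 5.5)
Your proof is correct and follows the paper's argument: $\rho$ reduces to $|t-s|^{1/2}$ for pairs with the same spatial point and to $|x-y|$ for pairs with the same time, so each quotient in $[u]'_{\delta/2,\delta;Q_T}$ is itself one of the quotients in the supremum defining $[u]_{\delta/2,\delta;Q_T}$, and adding the two bounds gives the factor $2$. (Your side remark on the reverse inequality is sound for the cylinder $\Omega\times(0,T)$ used here. For a convex, non-cylindrical domain, however, the corner point $(s,x)$ or $(t,y)$ need not lie in the domain, so that sketch would need modification.)
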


\begin{proof}
The proof is a simple consequence of the fact that
\begin{align*}
\frac{\left|u(t,x) - u(s,x) \right|}{|t - s|^{\delta/2}} + \frac{\left|u(t,x) - u(t,y) \right|}{|x - y|^{\delta}}
\le 2\mathop{\sup_{(t,x)\ne (s,y)}}_{(t,x),(s,y)\in Q_T}\frac{\left|u(t,x) - u(s,y) \right|}{\rho^{\delta}\left((t,x),(s,y)\right)} 
\end{align*}
\end{proof}


The next lemma provides a compact standard quantitative H\"{o}lder-operational-bounds needed in the stability theorem. These are facts from H\"{o}lder continuous spaces, so we omit the proof.
\begin{lemma}\label{smooth-Holder-operational-closure} 
Let $\delta\in(0,1)$ and set
\[
X:=C^{1+\delta/2}[0,T],
\qquad
Y:=C^{1+\delta/2,2+\delta}(\overline{Q}_T).
\]
Then the following statements hold.

\begin{enumerate}
    \item If $r\in Y$ and
    \[
    R(t):=\int_\Omega r(t,x)\,dx,
    \]
    then $R\in X$ and
    \[
    |R|_{1+\delta/2;\,(0,T)}
    \le C_\Omega
    |r|_{1+\delta/2,\,2+\delta;\,Q_T}.
    \]

    \item If $p,q\in X$, then $pq\in X$ and
    \[
    |pq|_{1+\delta/2;\,(0,T)}
    \le C
    |p|_{1+\delta/2;\,(0,T)}
    |q|_{1+\delta/2;\,(0,T)}.
    \]

    \item If $p\in X$ and $r\in Y$, then the function $Q(t,x):=p(t)r(t,x)$
    belongs to $Y$, and
    \[
    |Q|_{1+\delta/2,\,2+\delta;\,Q_T}
    \le C
    |p|_{1+\delta/2;\,(0,T)}
    |r|_{1+\delta/2,\,2+\delta;\,Q_T}.
    \]

    \item Let $p\in X$ and assume that
    \[
    \inf_{t\in[0,T]}|p(t)|\ge \eta>0.
    \]
    Then $p^{-1}\in X$, and
    \[
    |p^{-1}|_{1+\delta/2;\,(0,T)}
    \le C\left(\eta,|p|_{1+\delta/2;\,(0,T)}\right).
    \]
\end{enumerate}
Here the constants depend only on the indicated quantities, $\Omega$,
$T$, and $\delta$.
\end{lemma}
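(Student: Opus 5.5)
I will prove the four items of Lemma~\ref{smooth-Holder-operational-closure} directly from the definitions. On $[0,T]$ I use the norm $|p|_{1+\delta/2;(0,T)}=|p|_{0}+|p'|_{0}+[p']_{\delta/2}$. On $Y$ I use the norm~\eqref{full_norm}. The main tool is the elementary Leibniz-type estimate $[fg]_{\alpha}\le |f|_0[g]_\alpha+[f]_\alpha|g|_0$ for H\"older seminorms. It follows from $f(a)g(a)-f(b)g(b)=f(a)(g(a)-g(b))+(f(a)-f(b))g(b)$, and it holds for both the one-variable seminorm and the parabolic seminorm $[\cdot]_{\delta/2,\delta;Q_T}$. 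I will also use that a function of $t$ alone satisfies $[p]_{\delta/2,\delta;Q_T}\le[p]_{\delta/2;(0,T)}$, because $|t-s|^{\delta/2}\le\rho^\delta$. Finally, on the bounded interval, $[p]_{\delta/2}\le T^{1-\delta/2}|p'|_0$.

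\emph{Item 1.} Since $r$ and $\partial_t r$ are bounded and continuous on $\overline{Q}_T$ and $\Omega$ is bounded, differentiation under the integral sign is allowed, and $R'(t)=\int_\Omega\partial_t r(t,x)\,dx$. This gives $|R|_0\le|\Omega||r|_0$ and $|R'|_0\le|\Omega||\partial_t r|_0$. For the seminorm, $|R'(t)-R'(s)|\le\int_\Omega|\partial_tr(t,x)-\partial_tr(s,x)|\,dx\le|\Omega|[\partial_tr]_{\delta/2,\delta;Q_T}|t-s|^{\delta/2}$, where I use Lemma~\ref{norm_equiv}, or simply $\rho((t,x),(s,x))=|t-s|^{1/2}$. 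So $C_\Omega=|\Omega|$ works.

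\emph{Item 2.} We have $(pq)'=p'q+pq'$. The sup bounds are immediate. The Leibniz estimate gives $[p'q]_{\delta/2}\le|p'|_0[q]_{\delta/2}+[p']_{\delta/2}|q|_0$, and similarly for $pq'$. The terms $[q]_{\delta/2}$ and $[p]_{\delta/2}$ are controlled by $T^{1-\delta/2}$ times the sup of the derivative, which produces the constant $C(T,\delta)$.

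\emph{Item 3.} For $Q=pr$ I compute $\partial_{x_i}Q=p\,\partial_{x_i}r$, $\partial_{x_ix_j}Q=p\,\partial_{x_ix_j}r$ and $\partial_tQ=p'r+p\,\partial_tr$. The sup-norm parts are bounded by products of sups. For the parabolic seminorms I apply the Leibniz estimate with $[p]_{\delta/2,\delta;Q_T}\le[p]_{\delta/2}$ and $[p']_{\delta/2,\delta;Q_T}\le[p']_{\delta/2}$. The only term not immediately present in $|r|_{Y}$ is $[r]_{\delta/2,\delta;Q_T}$, which appears in $[p'r]$.

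\emph{Item 4.} We have $(p^{-1})'=-p'/p^2$. The sup bounds $|p^{-1}|_0\le\eta^{-1}$ and $|(p^{-1})'|_0\le|p'|_0\eta^{-2}$ are immediate. The seminorm of $p'p^{-2}$ is handled by the Leibniz estimate together with $|p^{-2}(t)-p^{-2}(s)|\le 2\eta^{-3}|p'|_0|t-s|$. This gives a bound polynomial in $\eta^{-1}$ and $|p|_X$.

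\emph{Main obstacle.} The step I expect to be the real obstacle is the term $[r]_{\delta/2,\delta;Q_T}$ in Item 3, because it requires an interpolation bound. If $\Omega$ is convex, the mean value theorem in both variables gives $|r(z_1)-r(z_2)|\le|\nabla r|_0|x-y|+|\partial_tr|_0|t-s|$. Each term is at most $C\rho^\delta$ because distances in the bounded set $\overline{Q}_T$ are bounded. For a general $C^{2+\delta}$ domain I would first handle pairs of points that are close, using local boundary flattening or a chain of segments within a uniform neighborhood. I would handle pairs that are far apart with the trivial estimate $2|r|_0/\rho_0^\delta$. All constants then depend only on $\Omega$, $T$ and $\delta$, as claimed.
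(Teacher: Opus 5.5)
The paper does not prove this lemma. It only says these are standard facts about H\"older spaces and omits the argument, so your direct verification is a self-contained route rather than a variant of the paper's. Your argument is correct.
\begin{itemize}
\item Item 1 works with $C_\Omega=|\Omega|$.
\item Item 2 works via $[q]_{\delta/2}\le T^{1-\delta/2}|q'|_0$.
\item Item 4 works via $|p^{-2}(t)-p^{-2}(s)|\le 2\eta^{-3}|p'|_0|t-s|$.
\item For item 3, your observation that $[p]_{\delta/2,\delta;Q_T}\le[p]_{\delta/2;(0,T)}$ for functions of $t$ alone reduces it to the Leibniz rule.
\end{itemize}

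What your route gains over the paper's citation is that it isolates the one step that is not a pure product estimate. The norm \eqref{full_norm} contains no $[r]_{\delta/2,\delta;Q_T}$ term. The term $[p'r]_{\delta/2,\delta;Q_T}$ inside $[\partial_t Q]_{\delta/2,\delta;Q_T}$ therefore forces a bound of $[r]_{\delta/2,\delta;Q_T}$ by $|r|_{0;Q_T}+\sum_i|\partial_{x_i}r|_{0;Q_T}+|\partial_t r|_{0;Q_T}$. The constant in that bound depends on the geometry of $\Omega$ (its diameter and a local quasiconvexity constant), not only on $|\Omega|$. This is consistent with the lemma allowing the constants to depend on $\Omega$.

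When you write out the nonconvex case, route the increment through the intermediate point $(t,y)$, which lies in the cylinder.
\begin{itemize}
\item The time increment is then handled by the mean value theorem on $[0,T]$, giving a factor $T^{1-\delta/2}$.
\item The spatial increment uses the fact that a bounded $C^1$ (even Lipschitz) domain admits $r_0>0$ and $C_\Omega$ such that any $x,y\in\Omega$ with $|x-y|<r_0$ are joined by a path in $\Omega$ of length at most $C_\Omega|x-y|$. Integrating $\nabla_x r$ along that path gives $|r(t,x)-r(t,y)|\le C_\Omega\,\mathrm{diam}(\Omega)^{1-\delta}\sup|\nabla_x r|\,|x-y|^\delta$.
\item Pairs with $|x-y|\ge r_0$ are handled by the trivial bound $2|r|_{0;Q_T}r_0^{-\delta}$.
\end{itemize}
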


\section{Existence and uniqueness}\label{existence_uniqueness}
 In this section we present the existence and uniqueness of solutions to suitable modifications of the inverse problem~\eqref{inv1}--\eqref{integparab1}.  We will first prove the existence and uniqueness of solutions to the following inverse problem: 

Find $(u,h)$ such that
\begin{alignat}{2}
\partial_{t}{u}(t,x) + \mathscr{L}u(t,x) &= h'(t)w(t,x)+h(t)\bigl(\partial_t w(t,x)+\mathscr{L}w(t,x)\bigr) + \phi(t,x), &&\quad (t,x) \in Q_T \label{inv2} \\
u(0,x) &= u_{0}(x), &&\quad x \in\Omega \label{icinv2} \\
\frac{\partial u(t,x)}{\partial \nu} &= h(t) \frac{\partial w(t,x)}{\partial \nu}+g(t,x), &&\quad (t,x) \in S_T \label{bcinv2}\\
\int_{\Omega}u(t,x)\,dx &= \mu(t). &&\quad t \in (0,T] \label{integparab2}
\end{alignat}
After the proof, in Remark~\ref{simpler-forms}, we provide some special cases that lead to simpler forms than the general form presented above.

\begin{theorem}\label{existence-uniqueness}
  Let $\delta\in (0,1)$ and suppose that assumptions $(A1)$-$(A4)$ hold.  Let $v=v(t,x)$ be the unique solution of the forward problem~\eqref{parab1}--\eqref{bcparab1}.  Let $w\in C^{1+\delta/2,2+\delta}\left(\overline{Q}_T\right)$ be a given function such that $\int_\Omega w(t,x)\,dx\neq 0$ for all $0\le t\le T$. 
  Then the inverse problem~\eqref{inv2}--\eqref{integparab2} has the following solution $(u,h)\in C^{1+\delta/2, 2+\delta}(\overline{Q}_T)\times C^{1+\delta/2}[0,T]$:  
 \begin{equation}
      u(t,x)=h(t)w(t,x)+v(t,x),\label{sol_u}
  \end{equation}
  where
 \begin{equation}
     h(t)=\frac{\mu(t)-\int_\Omega v(t,x)\,dx}{\int_\Omega w(t,x)\,dx}\label{sol_h}.\\
 \end{equation}
  
Under the additional condition that $h(0)=0$, this solution is unique.    
 \end{theorem}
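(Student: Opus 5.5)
Existence is a direct verification, and uniqueness reduces to a homogeneous forward problem whose solution is forced to vanish.

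\textbf{Existence.} First, Lemma~\ref{keyTheorem} together with $(A1)$--$(A3)$ gives a unique $v\in C^{1+\delta/2,2+\delta}(\overline{Q}_T)$ solving \eqref{parab1}--\eqref{bcparab1}.

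Next I show that $h$ in \eqref{sol_h} lies in $C^{1+\delta/2}[0,T]$. Set $V(t)=\int_\Omega v\,dx$ and $W(t)=\int_\Omega w\,dx$. Item 1 of Lemma~\ref{smooth-Holder-operational-closure} gives $V,W\in C^{1+\delta/2}[0,T]$. Since $W$ is continuous and nonvanishing on the compact interval $[0,T]$, it satisfies $\inf|W|\ge\eta>0$, so item 4 gives $W^{-1}\in C^{1+\delta/2}[0,T]$. By $(A4)$, $\mu-V\in C^{1+\delta/2}$, and item 2 then gives $h=(\mu-V)W^{-1}\in C^{1+\delta/2}[0,T]$. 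Item 3 gives $hw\in C^{1+\delta/2,2+\delta}(\overline{Q}_T)$, hence $u=hw+v$ lies there too.

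It remains to check the four equations directly:
\begin{itemize}
\item $\partial_t u+\mathscr{L}u=h'w+h(\partial_t w+\mathscr{L}w)+\phi$, by linearity of $\mathscr L$ and the product rule (note $\mathscr L$ acts only in $x$, so $\mathscr L(hw)=h\mathscr Lw$).
\item $\partial_\nu u=h\,\partial_\nu w+g$ on $S_T$.
\item $\int_\Omega u\,dx=hW+V=\mu$, directly from the definition of $h$.
\item $u(0,\cdot)=h(0)w(0,\cdot)+u_0$. This equals $u_0$ exactly when $h(0)=0$. By $(A4)$, $h(0)=(\mu(0)-\int_\Omega u_0\,dx)/W(0)=0$, so the initial condition holds automatically.
\end{itemize}
I will point out this use of the compatibility condition explicitly.

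\textbf{Uniqueness.} Suppose $(u_1,h_1)$ and $(u_2,h_2)$ are two solutions with $h_i(0)=0$. Set $\tilde h=h_1-h_2$ and $\tilde u=u_1-u_2-\tilde h w$. A direct computation shows that $\tilde u$ solves the forward problem \eqref{parab1}--\eqref{bcparab1} with $\phi=0$, $g=0$, and initial datum $\tilde u(0)=-\tilde h(0)w(0)=0$. Here the hypothesis $h(0)=0$ is essential, and this data satisfies $(A3)$ trivially. By uniqueness in Lemma~\ref{keyTheorem}, $\tilde u\equiv0$, so $u_1-u_2=\tilde h w$. Integrating over $\Omega$ gives $0=\mu-\mu=\tilde h(t)W(t)$ for $t\in(0,T]$. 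Since $W\neq0$, we get $\tilde h\equiv0$ on $(0,T]$, hence on $[0,T]$ by continuity, and therefore $u_1=u_2$.

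\textbf{Main obstacle.} The delicate points are the regularity bookkeeping for $h$, which is why nonvanishing of $W$ must be turned into a uniform lower bound $\eta$, and the role of $h(0)=0$ in the initial condition. Both are handled above. The PDE verification itself is routine.
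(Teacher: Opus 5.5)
Your proof is correct and follows essentially the same route as the paper: Lemma~\ref{smooth-Holder-operational-closure} gives the regularity of $h$ and $u$, and the compatibility condition in $(A4)$ gives $h(0)=0$ and hence the initial condition. Uniqueness is likewise obtained by reducing to the homogeneous forward problem, using the uniqueness in Lemma~\ref{keyTheorem}, and then applying the integral constraint with $\int_\Omega w\,dx\neq 0$. The differences are cosmetic: you compare two arbitrary solutions rather than one solution against $hw+v$, and you are a bit more explicit than the paper about the uniform lower bound on $\bigl|\int_\Omega w\,dx\bigr|$ and the endpoint $t=0$.
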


\begin{proof}  Let \(u\) and \(h\) be defined as in equations~\eqref{sol_u} and~\eqref{sol_h}. First, \(\mu(0)-\int_\Omega v(0,x)\ dx=0\) because \(v\) satisfies~\eqref{icparab1} and \(\mu\) satisfies the compatibility condition in assumption $(A4)$. Therefore, equation~\eqref{sol_h} implies \(h(0)=0\) and equation~\eqref{sol_u} implies \(u(0,x)=v(0,x)=u_0(x)\).

Now we compute from~\eqref{sol_u} the relevant derivatives in terms of $h$ and $w$: 
\begin{align*}
\partial_t u(t,x) &=  h^\prime(t)w(t,x) + h(t)\partial_t w(t,x)+\partial_t v(t,x),\\
\mathscr{L} u(t,x) &= h(t) \mathscr{L}w(t,x) + \mathscr{L} v(t,x),\\
\frac{\partial u(t,x)}{\partial\nu}&=h(t)\frac{\partial w(t,x)}{\partial\nu}+\frac{\partial v(t,x)}{\partial\nu}.
\end{align*}
  Putting all of this together, \(u\) and \(h\) satisfy the equations
\begin{alignat}{2}
\partial_{t}{u}(t,x) + \mathscr{L}u(t,x) &=  h'(t)w(t,x)+h(t)\bigl(\partial_t w(t,x)+\mathscr{L}w(t,x)\bigr) + \phi(t,x), &&\quad (t,x) \in Q_T \label{fwd1} \\
u(0,x) &= u_{0}(x), &&\quad x \in\Omega \label{icfwd1} \\
\frac{\partial u(t,x)}{\partial \nu} &= h(t)\frac{\partial w(t,x)}{\partial\nu}+g(t,x), &&\quad (t,x) \in S_T. \label{bcfwd1}
\end{alignat}

 Now we take any $t\in (0,\,T]$ and compute the integral of $u(t,x)$ over $\Omega$:

\[
    \int_\Omega u(t,x)\, dx = \left[ \frac{\mu(t)-\int_\Omega v(t,x)\,dx}{\int_\Omega w(t,x)\,dx}\right]\int_\Omega w(t,x)\,dx+\int_\Omega v(t,x)\, dx = \mu(t),
\]
so that $u$ satisfies integral condition~\eqref{integparab2}.  Therefore, we have verified that the choice $(u,h)$ specified in equations~\eqref{sol_u}-\eqref{sol_h} solves the inverse problem~\eqref{inv2}--{}\eqref{integparab2}.

Next, we show that \(h\in C^{1+\delta/2}[0,T]\) and \(u\in C^{1+\delta/2,2+\delta}(\overline{Q}_T)\). To this end, recall that \(v\in C^{1+\delta/2,2+\delta}(\overline{Q}_T)\) according to Lemma~\ref{keyTheorem}.  Both \(t\mapsto \int_\Omega w(t,x)dx \) and \(t\mapsto \int_\Omega v(t,x)dx \) are in \(C^{1+\delta/2}[0,T]\) by Lemma~\ref{smooth-Holder-operational-closure} (Statement 1).  Since we assumed \(\int_\Omega w(t,x)dx\) is nonzero, the reciprocal \(t\mapsto \left(\int_\Omega w(t,x)dx\right)^{-1}\) is in \(C^{1+\delta/2}[0,T]\) by Lemma~\ref{smooth-Holder-operational-closure} (Statement 4).  Since we assumed \(\mu\in C^{1+\delta/2}[0,T]\), it follows from linear closure and Lemma~\ref{smooth-Holder-operational-closure} (Statement 2) that \(h\in C^{1+\delta/2}[0,T]\).  

In addition, to verify that \(h\) is given by~\eqref{sol_h}, we compute
\begin{align*}
    \mu(t)&=\int_\Omega u(t,x)\, dx
    = h(t)\int_\Omega w(t,x)\,dx+\int_\Omega v(t,x)\,dx.
\end{align*}
Solving this expression for \(h(t)\) verifies~\eqref{sol_h}.
Finally, \(u=hw+v\in C^{1+\delta/2,2+\delta}(\overline{Q}_T)\) by Lemma~\ref{smooth-Holder-operational-closure} (Statement 3) and linear closure.

For the stated uniqueness property, let \((u,h)\) be any solution to the inverse problem~\eqref{inv2}--{}\eqref{integparab2} such that $h(0)=0$.  First, notice that \(u\) solves the forward problem represented by~\eqref{fwd1}--{}\eqref{bcfwd1}.  The same may be said for the function defined by the expression  \(h(t)w(t,x)+v(t,x)\), with the help of the assumption \(h(0)=0\) for the initial condition.  Therefore, the difference \(u(t,x)-h(t)w(t,x)-v(t,x)\) satisfies the homogeneous problem associated with ~\eqref{fwd1}--{}\eqref{bcfwd1}.  By uniqueness of solutions to the forward problem, \(u(t,x)-h(t)w(t,x) -v(t,x)\equiv 0\), and $u$ satisfies~\eqref{sol_u}.
\end{proof}

\begin{remark}\label{solution-set}
  In the situation of Theorem~\ref{existence-uniqueness}, uniqueness depends on the restriction \(h(0)=0\).  If we remove this assumption and assume that \((\tilde{u},\tilde{h})\) is a solution, then take \(\tilde{v}(t,x)=\tilde{u}(t,x)-\tilde{h}(t)w(t,x)\).  Then \(\tilde{v}\) solves the forward problem~\eqref{parab1}--\eqref{bcparab1}, but with initial condition \(\tilde{v}(0,x)=u_0(x)-\tilde{h}(0)w(0,x)\).  Using the integral condition~\eqref{integparab2}, we must have
  \[
  \tilde{h}(t)=\frac{\mu(t)-\int_\Omega \tilde{v}(t,x)\,dx}{\int_\Omega w(t,x)\,dx}
  \]
  and \(\tilde{u}(t,x)=\tilde{h}(t)w(t,x)+\tilde{v}(t,x)\).  It is easy to check that conversely, these choices for \(\tilde{v}, \tilde{u},\,\text{and } \tilde{h}\) will yield a solution to the inverse problem~\eqref{inv2}--\eqref{integparab2}, for any given value of \(\tilde{h}(0)\), and also that \(\tilde{h}\in C^{1+\delta/2}[0,T]\) and \(\tilde{u}\in C^{1+\delta/2,2+\delta}(\overline{Q}_T)\).  We emphasize that the solution form \(u(t,x)=h(t)w(t,x)+v(t,x)\) already implies \(h(0)=0\), and that other values of \(h(0)\) result in \(u(t,x)={h}(t)w(t,x)+\tilde{v}(t,x)\).
\end{remark}

\begin{remark}\label{simpler-forms}  By direct computation as in the proof of Theorem~\ref{existence-uniqueness}, the form \(u(t,x)=h(t)w(t,x)+v(t,x)\) alone implies that the inverse problem must have the terms of the form presented in~\eqref{inv2}--{}\eqref{integparab2}.  Therefore, to arrive at a simpler formulation one could impose additional conditions on \(w(t,x)\) to simplify the form of the expressions in~\eqref{inv2}--{}\eqref{integparab2}.  Here are a few practical examples of such conditions, though of course others are possible.  
\begin{itemize}
    \item If $w\equiv 1$ and if $c\equiv 0$ in $\mathscr{L}$ (see~\eqref{operator1}), we recover exactly the result in~\cite{ngoma2024well}. In the numerical solutions in Section~\ref{piece_wise_approximation}, we use $w\equiv 1$, $c\equiv 1$, and reconstruct a piecewise constant source $h(t)$ with jump discontinuities in a PDE with forcing term $h(t)+h'(t)$.
\item If $\partial_t w+\mathscr{L}w=0$ with homogeneous Neumann boundary conditions on $w$, then the inverse problem is
\begin{alignat*}{2}
\partial_{t}{u}(t,x) + \mathscr{L}u(t,x) &= h'(t)w(t,x) + \phi(t,x), &&\quad (t,x) \in Q_T  \\
u(0,x) &= u_{0}(x), &&\quad x \in\Omega \\
\frac{\partial u(t,x)}{\partial \nu} &= g(t,x), &&\quad (t,x) \in S_T \\
\int_{\Omega}u(t,x)\,dx &= \mu(t). &&\quad t \in (0,T]
\end{alignat*}

\item If $w=w(x)$ with homogeneous Neumann boundary conditions on $w$, then we have the inverse problem
\begin{alignat*}{2}
\partial_{t}{u}(t,x) + \mathscr{L}u(t,x) &= h'(t)w(x) + h(t)\mathscr{L}w(x)+ \phi(t,x), &&\quad (t,x) \in Q_T  \\
u(0,x) &= u_{0}(x), &&\quad x \in\Omega \\
\frac{\partial u(t,x)}{\partial \nu} &= g(t,x), &&\quad (t,x) \in S_T \\
\int_{\Omega}u(t,x)\,dx &= \mu(t). &&\quad t \in (0,T]
\end{alignat*}

\item In case $w$ is chosen to have support in a subset $\Omega_0\subset\subset\Omega$, note that in $\Omega\setminus \Omega_0$, the solution $u$ and the data of the inverse problem agree with $v$ and the data of forward problem, respectively.  This shows that it is possible to satisfy the integral condition by acting only in a desired subset of $\Omega$. 
\end{itemize}
\end{remark}

\section{Continuous Dependence on the Data}\label{continous_dependence}
In this section, we show that the solution of the inverse problem depends continuously on the data.

\begin{theorem}\label{stability}
Let $\delta\in (0,1)$. Suppose Assumptions $(A1)$-$(A4)$ hold. According to Theorem~\ref{existence-uniqueness}, let $(u,h)$ and $(\tilde{u},\tilde{h})$ be the unique solutions of the inverse problem~\eqref{inv2}--\eqref{integparab2} corresponding to the sets of data $\{\phi, u_0, g, \mu\}$ and $\{\tilde{\phi}, \tilde{u}_0, \tilde{g}, \tilde{\mu}\}$, respectively, and satisfying $h(0)=\widetilde h(0)=0$. Then there exist constants $C_1,C_2>0$, depending only on the fixed
problem parameters, including $\Omega$, $T$, $\delta$, the coefficients
of $\mathscr{L}$, the function $w$, and the positive lower bound
\[
\eta:=\min_{t\in[0,T]}
\left|\int_\Omega w(t,x)\,dx\right|,
\]
but independent of the two sets of data
$\{\phi,u_0,g,\mu\}$ and
$\{\widetilde\phi,\widetilde u_0,\widetilde g,\widetilde\mu\}$,
such that
\begin{align*}
    \left|h-\tilde{h}\right|_{1+\delta/2;(0,T)}
    &\le C_1\left(\left|\mu - \tilde{\mu}\right|_{1+\delta/2;(0,T)} +\left|\phi-\tilde{\phi}\right|_{\delta/2, \delta;Q_T} + \left|u_0 - \tilde{u}_0\right|_{2+\delta; \Omega} + \left|g - \tilde{g}\right|_{(1+\delta)/2, 1+\delta; S_T}\right),
\end{align*}
and
\begin{align*}
    \left|u - \tilde{u}\right|_{1+\delta/2, 2 + \delta; Q_T} &\le
    C_2\left(\left|\mu - \tilde{\mu}\right|_{1+\delta/2; (0,T)} + \left|\phi - \tilde{\phi}\right|_{\delta/2, \delta;Q_T} + \left|u_0 - \tilde{u}_0\right|_{2+\delta; \Omega} + \left|g - \tilde{g}\right|_{(1+\delta)/2, 1+\delta; S_T}\right).
\end{align*}
\end{theorem}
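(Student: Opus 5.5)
The argument will be linear throughout: by Theorem~\ref{existence-uniqueness}, both solutions have the explicit representation $u=hw+v$ and $\tilde u=\tilde h w+\tilde v$, where $v,\tilde v$ solve the forward problem~\eqref{parab1}--\eqref{bcparab1} with data $\{\phi,u_0,g\}$ and $\{\tilde\phi,\tilde u_0,\tilde g\}$, and
\[
h-\tilde h=\frac{(\mu-\tilde\mu)-\int_\Omega (v-\tilde v)(t,x)\,dx}{\int_\Omega w(t,x)\,dx}.
\]
The plan has three steps. First I will estimate $V:=v-\tilde v$. Then I will deduce the bound on $h-\tilde h$ from the formula above. Finally I will bound $u-\tilde u=(h-\tilde h)w+V$.

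\textbf{Step 1.} By linearity of~\eqref{parab1}--\eqref{bcparab1}, $V$ solves the forward problem with data $\phi-\tilde\phi$, $u_0-\tilde u_0$ and $g-\tilde g$. The differences still satisfy $(A3)$, since the compatibility condition $g(x,0)=\partial_\nu u_0(x)$ is linear and therefore holds for them. Lemma~\ref{keyTheorem} then gives
\[
|V|_{1+\delta/2,2+\delta;Q_T}\le C\bigl(|\phi-\tilde\phi|_{\delta/2,\delta;Q_T}+|u_0-\tilde u_0|_{2+\delta;\Omega}+|g-\tilde g|_{(1+\delta)/2,1+\delta;S_T}\bigr)=:C\,D,
\]
where $C$ depends only on the coefficients, $\Omega$, $T$ and $\delta$.

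\textbf{Step 2.} Statement~1 of Lemma~\ref{smooth-Holder-operational-closure} gives $|\int_\Omega V\,dx|_{1+\delta/2;(0,T)}\le C_\Omega C D$. Next, set $W(t)=\int_\Omega w(t,x)\,dx$. Statement~1 again gives $|W|_{1+\delta/2;(0,T)}\le C_\Omega|w|_{1+\delta/2,2+\delta;Q_T}$. Because $|W|\ge\eta$, Statement~4 yields $|W^{-1}|_{1+\delta/2;(0,T)}\le C(\eta,w)$; this constant depends only on $w$ and $\eta$ and not on the data. Statement~2, applied to the product of $W^{-1}$ with $(\mu-\tilde\mu)-\int_\Omega V\,dx$, then gives
\[
|h-\tilde h|_{1+\delta/2;(0,T)}\le C\,C(\eta,w)\bigl(|\mu-\tilde\mu|_{1+\delta/2;(0,T)}+C_\Omega C D\bigr),
\]
which is the first estimate of Theorem~\ref{stability}.

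\textbf{Step 3.} Statement~3 gives $|(h-\tilde h)w|_{1+\delta/2,2+\delta;Q_T}\le C|h-\tilde h|_{1+\delta/2;(0,T)}\,|w|_{1+\delta/2,2+\delta;Q_T}$. Adding the bound on $V$ from Step~1 and using the triangle inequality gives the second estimate, with $C_2$ depending on $|w|$ and $C_1$.

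The only point requiring care is to check that every constant is independent of the data. This holds because the data enter linearly, while the nonlinear operations (reciprocal and products) act only on the fixed function $w$. Note in particular that the reciprocal bound in Statement~4 depends on $|W|_{1+\delta/2}$, which is controlled by $w$ alone. No step beyond this bookkeeping seems to be a genuine obstacle.
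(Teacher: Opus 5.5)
Your proposal is correct and follows essentially the same route as the paper: write $h-\tilde h$ via the reconstruction formula with the common denominator $\int_\Omega w\,dx$, bound $v-\tilde v$ by the forward Schauder estimate of Lemma~\ref{keyTheorem}, and use Statements~1--4 of Lemma~\ref{smooth-Holder-operational-closure} for the integral, the reciprocal (with constant depending only on $w$ and $\eta$), the product, and the term $(h-\tilde h)w$. The only differences are in ordering and detail. The paper first bounds $h-\tilde h$ in terms of $|v-\tilde v|_{1+\delta/2,2+\delta;Q_T}$ and only then inserts the forward estimate. Your explicit check that the differenced data still satisfy the compatibility condition in $(A3)$ is a small point the paper leaves implicit.
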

\begin{proof}
Let $v$ and $\tilde{v}$ be the forward solutions corresponding to the data $(\phi,u_0,g)$ and $(\tilde\phi,\tilde{u}_0,\tilde{g})$, respectively.  Now we use~\eqref{sol_h} to obtain
\[
h(t)=\frac{\mu(t)-\int_\Omega v(t,x)\,dx}{q(t)},
\qquad
\tilde{h}(t)=\frac{\tilde\mu(t)-\int_\Omega \tilde{v}(t,x)\,dx}{q(t)},
\]
where
\(
q(t):=\int_\Omega w(t,x)\,dx
\).
Hence
\begin{equation}\label{h_difference}
h(t)-\tilde{h}(t)
=
q(t)^{-1}
\left[
\mu(t)-\tilde\mu(t)
-
\int_\Omega (v(t,x)-\tilde{v}(t,x))\,dx
\right].
\end{equation}

Since $w\in C^{1+\delta/2,2+\delta}(\overline{Q}_T)$, Lemma~\ref{smooth-Holder-operational-closure} (Statement 1) implies that
\(q\in C^{1+\delta/2}[0,T]\). In addition, since $q(t)\neq 0$ for every $t\in[0,T]$, there exists $\eta>0$ such that
\[
\inf_{t\in[0,T]}|q(t)|\ge \eta.
\]
Hence, Statement~4 of Lemma~\ref{smooth-Holder-operational-closure}
implies that $q^{-1}\in C^{1+\delta/2}[0,T]$.

Moreover, Lemma~\ref{smooth-Holder-operational-closure} (Statement 1) shows that
\(
t\mapsto \int_\Omega (v(t,x)-\tilde{v}(t,x))\, dx
\) belongs to $C^{1+\delta/2}[0,T]$, and its Hölder norm is controlled by
\(|v-\tilde{v}|_{1+\delta/2,2+\delta;Q_T}
\). 

Applying Lemma~\ref{smooth-Holder-operational-closure} to~\eqref{h_difference} therefore gives
\begin{align}
|h-\tilde{h}|_{1+\delta/2;(0,T)}
&\le
C_1\Big(
|\mu-\tilde\mu|_{1+\delta/2;(0,T)}
+
|v-\tilde{v}|_{1+\delta/2,2+\delta;Q_T}
\Big),
\label{h_estimate}
\end{align}
where $C_1$ may depend on $|q^{-1}|_{1+\delta/2;(0,T)}$, and hence on the fixed function $w$ and the lower bound $\eta$, but not on either set of perturbed data.

Next, $v$ and $\tilde{v}$ satisfy the forward problems
\begin{align*}
\partial_t v+\mathscr L v &= \phi, &
v(0,x)&=u_0, &
\frac{\partial v}{\partial\nu}&=g,\\
\partial_t \tilde{v}+\mathscr L \tilde{v} &= \tilde\phi, &
\tilde{v}(0,x)&=\tilde{u}_0, &
\frac{\partial \tilde{v}}{\partial\nu}&=\tilde{g}.
\end{align*}
Hence the difference $v-\tilde{v}$ satisfies
\begin{align*}
\partial_t (v-\tilde{v})+\mathscr L (v-\tilde{v})
&=
\phi-\tilde\phi
\quad \text{in } Q_T,\\
(v-\tilde{v})(0,x)
&=
u_0-\tilde{u}_0
\quad \text{in } \Omega,\\
\frac{\partial (v-\tilde{v})}{\partial\nu}
&=
g-\tilde{g}
\quad \text{on } S_T.
\end{align*}

By Lemma~\ref{keyTheorem}, it follows that
\begin{align}
|v-\tilde{v}|_{1+\delta/2,2+\delta;Q_T}
\le
C_2\Big(
|\phi-\tilde\phi|_{\delta/2,\delta;Q_T}
+
|u_0-\tilde{u}_0|_{2+\delta;\Omega}
+
|g-\tilde{g}|_{(1+\delta)/2,1+\delta;S_T}
\Big).
\label{v_estimate}
\end{align}

Substituting~\eqref{v_estimate} into~\eqref{h_estimate} yields
\begin{align}
|h-\tilde{h}|_{1+\delta/2;(0,T)}
\le
C_3\Big(
|\mu-\tilde\mu|_{1+\delta/2;(0,T)}
+
|\phi-\tilde\phi|_{\delta/2,\delta;Q_T}
+
|u_0-\tilde{u}_0|_{2+\delta;\Omega}
+
|g-\tilde{g}|_{(1+\delta)/2,1+\delta;S_T}
\Big).
\label{h_final}
\end{align}

Finally, from equation~\eqref{sol_u},
\[
u(t,x)-\tilde{u}(t,x)
=
v(t,x)-\tilde{v}(t,x)
+
\left(h(t)-\tilde{h}(t)\right)w(t,x).
\]
Since $w\in C^{1+\delta/2,2+\delta}(\overline{Q}_T)$, Lemma~\ref{smooth-Holder-operational-closure} (statement 3) implies that $(h-\tilde{h})w\in C^{1+\delta/2,2+\delta}(\overline{Q}_T),$
and
\[
|(h-\tilde{h})w|_{1+\delta/2,2+\delta;Q_T}
\le
C_4\,|h-\tilde{h}|_{1+\delta/2;(0,T)},
\]
where $C_4$ may depend on $|q^{-1}|_{1+\delta/2;(0,T)}$, and hence on the fixed function $w$ and the lower bound $\eta$, but not on either set of perturbed data.
Therefore
\[
|u-\tilde{u}|_{1+\delta/2,2+\delta;Q_T}
\le
|v-\tilde{v}|_{1+\delta/2,2+\delta;Q_T}
+
C_4\,|h-\tilde{h}|_{1+\delta/2;(0,T)}.
\]
Combining this with~\eqref{v_estimate} and~\eqref{h_final} yields the desired estimate for $u$.
\end{proof}
\section{Higher Regularity of the Solutions} \;\label{higher_regularity}
In this section, we show that if $\mu$ and \(w\) possess some higher regularity, then the solution pair $(u,h)$ of the inverse problem will possess higher regularity as well.  We start with the following result~\cite[Theorem 11, Chapter 3, Section 5]{Friedman08}, which ensures that as long as the source function \(\phi\) and the coefficients \(a^{ij}\), \(b^{i}\), and \(c\) possess some higher regularity, then the forward solution \(v\) will also have this regularity, along with all the derivatives \(\partial_t v\), \(\partial_{x_i} v\) and \(\partial_{x_ix_j}v\).  
\begin{lemma}\label{friedman}
    Suppose \(\delta\in (0,1)\) and Assumptions (A1)-(A2) hold.  Let \(M\) and \(N\) be nonnegative integers.  Assume that
    \[
D^\beta_x\partial_t^k a^{ij},\quad D^\beta_x\partial_t^k b^{i}, \quad D^\beta_x\partial_t^k c,\quad D^\beta_x\partial_t^k \phi,\quad (0\le \abs{\beta}+2k\le M,\; k\le N)
    \]
exist and are in \(C^{\delta/2,\delta}({Q}_T)\).  If \(v\) is a solution to the forward problem~\eqref{parab1} in $Q_T$, then
\[
D^\beta_x\partial_t^k v\in C^{\delta/2,\delta}({Q}_T),\quad (0\le \abs{\beta}+2k\le M+2,\; k\le N+1).
\]
\qed
\end{lemma}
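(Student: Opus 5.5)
The plan is to prove the statement by induction on the regularity indices, differentiating the equation and applying the base Schauder theory (Lemma~\ref{keyTheorem}, or rather its interior/local analogue, since the statement concerns $Q_T$ and no boundary data are involved) at each step. The base case is the ordinary H\"older estimate. Lemma~\ref{keyTheorem} together with assumption (A1) gives
\[
v,\ \partial_t v,\ \partial_{x_i}v,\ \partial_{x_ix_j}v\in C^{\delta/2,\delta},
\]
which is exactly the claim for $M=N=0$: the multi-indices with $\abs{\beta}+2k\le 2$ and $k\le 1$.

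\textbf{Spatial induction.} Assume the claim holds for some $M$, and that the coefficients and $\phi$ have $M+1$ spatial derivatives in $C^{\delta/2,\delta}$. Formally differentiate~\eqref{parab1} in $x_l$. Then $z=\partial_{x_l}v$ satisfies
\[
\partial_t z+\mathscr{L}z=\partial_{x_l}\phi+\sum_{i,j}\partial_{x_l}a^{ij}\,\partial_{x_ix_j}v-\sum_i\partial_{x_l}b^i\,\partial_{x_i}v-\partial_{x_l}c\,v .
\]
By the inductive hypothesis the right-hand side has the regularity required at level $M$. The induction then yields $M+3$ total parabolic derivatives of $v$.

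\textbf{Temporal induction.} Proceed the same way for time. Differentiating in $t$ gives an equation for $\partial_t v$ whose source involves $\partial_t a^{ij}$, $\partial_t b^i$, $\partial_t c$, $\partial_t\phi$ multiplied by derivatives of $v$ already controlled. The H\"older algebra property (products of $C^{\delta/2,\delta}$ functions stay in $C^{\delta/2,\delta}$) keeps the sources in the right class. The bookkeeping $\abs{\beta}+2k\le M+2$, $k\le N+1$ comes out because each time derivative costs two spatial orders.

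\textbf{Main obstacle: justifying the formal differentiation.} A priori we do not know that $\partial_{x_l}v$ is regular enough to solve the differentiated equation classically. I would handle this with difference quotients. Set
\[
\Delta_h^l v=\frac{v(t,x+he_l)-v(t,x)}{h}.
\]
This function satisfies an equation with uniformly bounded $C^{\delta/2,\delta}$ data on subcylinders. Interior Schauder estimates then give uniform $C^{1+\delta/2,2+\delta}$ bounds on compact subsets. Arzel\`a--Ascoli lets us pass $h\to0$ and identify the limit with $\partial_{x_l}v$. For time the analogous difference quotient in $t$ is used, again via interior estimates.

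Because this regularity is interior (the lemma has $Q_T$ rather than $\overline{Q}_T$ and imposes no compatibility conditions), localization with cutoff functions handles the commutator terms. Those terms involve only lower-order derivatives of $v$, which are already controlled.
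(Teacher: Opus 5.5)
The paper gives no argument for this lemma: it simply quotes Friedman's Theorem~11 (Ch.~3, Sec.~5). Your induction on the parabolic order is the standard route to interior higher regularity of this kind, and it is correct in outline. That induction consists of differentiating the equation, justifying each differentiation with spatial and temporal difference quotients bounded uniformly by interior Schauder estimates, and passing to the limit with Arzel\`a--Ascoli.

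The citation buys brevity. Your version makes explicit that what is proved is local H\"older continuity on compact subsets of $Q_T$. That reading is forced, though not by writing $Q_T$ instead of $\overline{Q}_T$: by Definition~\ref{def2}, a finite $C^{\delta/2,\delta}(Q_T)$ norm is already a global condition. Read globally, the lemma fails already for $M=1$. Take the heat equation with $g=0$ and a compatible $u_0\in C^{2+\delta}(\overline{\Omega})$ whose Hessian is not Lipschitz; then $D_x^3v$ cannot stay bounded as $t\to0^+$. This is exactly why Theorem~\ref{regularity} has to assume $\partial_t^kv\in C^{\delta/2,\delta}(\overline{Q}_T)$ separately.

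Two details should be tightened.
\begin{enumerate}
\item For an arbitrary solution of \eqref{parab1} in $Q_T$, the base case is not Lemma~\ref{keyTheorem}, which needs the initial and boundary data of (A3). Nor is it an a priori estimate. What you need is the interior regularity theorem: a classical solution with H\"older coefficients and source lies locally in $C^{1+\delta/2,2+\delta}$. In the paper's application this step is free, because $v$ comes from Lemma~\ref{keyTheorem}.
\item The spatial step only produces $D_x^\beta\partial_t^kv$ with $\abs{\beta}\ge1$, so it does not by itself give ``$M+3$ total parabolic derivatives.'' When $M$ is odd and $(M+3)/2\le N+1$, the pure derivative $\partial_t^{(M+3)/2}v$ must come from your temporal step applied at level $(M-1,N-1)$. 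Alternatively, obtain it directly from $\partial_tv=\sum a^{ij}\partial_{x_ix_j}v-\sum b^i\partial_{x_i}v-cv+\phi$. With this two-index bookkeeping the induction closes.
\end{enumerate}
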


Since the regularity of \(v\) is therefore guaranteed, our task amounts to verifying that a smoother version of Lemma~\ref{smooth-Holder-operational-closure} holds. We omit the proof as these results are standard.
\begin{lemma}~\label{smoother-Holder-operational-closure}
Let \(\delta\in(0,1)\) and that Assumption (A2) holds. Let \(p\), \(q\in C^{\delta/2}[0,T]\), and let \(r\in C^{\delta/2,\delta}(\overline{Q}_T)\).  Let $N$ be a given positive integer and assume that the derivatives \(p^{(k)},\, q^{(k)} \in C^{\delta/2}[0,T]\), and  \(\partial_t^k r\in C^{\delta/2,\delta}(\overline{Q}_T)\) for all \(0\le k\le N\).
\begin{enumerate}
    \item If \(R(t) := \int_\Omega r(t,x)\,dx\), then \( R^{(k)}\in C^{\delta/2}[0,T]\) for all \(0\le k\le N\).
    \item If \(p(t)\neq 0\) for all \(t\in [0,T]\) and  \(P(t) := p(t)^{-1}\), then  \( P^{(k)}\in C^{\delta/2}[0,T]\) for all \(0\le k\le N\). 
    \item If \( S(t):= p(t)q(t)\), then \( S^{(k)}\in C^{\delta/2}[0,T]\) for all \(0\le k\le N\).
\end{enumerate}
\end{lemma}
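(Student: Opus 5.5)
The three statements are independent and follow from two basic facts about the space $C^{\delta/2}[0,T]$: it is a Banach algebra under pointwise multiplication, and a function in it that is bounded away from zero has its reciprocal in it. Concretely, I would first record that for $f,g\in C^{\delta/2}[0,T]$,
\[
[fg]_{\delta/2}\le |f|_0[g]_{\delta/2}+|g|_0[f]_{\delta/2},
\]
and that if $|f|\ge\eta>0$, then
\[
[1/f]_{\delta/2}\le \eta^{-2}[f]_{\delta/2}.
\]
Both come from the identities $f(t)g(t)-f(s)g(s)=f(t)(g(t)-g(s))+g(s)(f(t)-f(s))$ and $1/f(t)-1/f(s)=(f(s)-f(t))/(f(t)f(s))$.

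For Statement 1, I would differentiate under the integral sign. Since $\Omega$ is bounded by (A2) and each $\partial_t^k r$ for $k<N$ has a continuous (indeed bounded, Hölder) time derivative $\partial_t^{k+1} r$ on $\overline{Q}_T$, dominated convergence gives by induction $R^{(k)}(t)=\int_\Omega \partial_t^k r(t,x)\,dx$ for $0\le k\le N$. The Hölder estimate in $t$ then follows by taking $x=y$ in the parabolic seminorm, exactly as in Lemma~\ref{norm_equiv}:
\[
|R^{(k)}(t)-R^{(k)}(s)|\le |\Omega|\,[\partial_t^k r]_{\delta/2,\delta;Q_T}\,|t-s|^{\delta/2}.
\]

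For Statement 3, I would use the Leibniz formula $S^{(k)}=\sum_{j=0}^k\binom{k}{j}p^{(j)}q^{(k-j)}$. Every factor lies in $C^{\delta/2}[0,T]$, so the algebra property gives $S^{(k)}\in C^{\delta/2}[0,T]$.

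For Statement 2, I would argue by induction on $k$. Continuity of $p$ on the compact interval $[0,T]$ together with $p\neq0$ gives $|p|\ge\eta>0$, so $P\in C^{\delta/2}[0,T]$ by the reciprocal bound. Next, $P$ is $C^N$ and $P'=-p'P^2$. Applying Leibniz to this identity expresses $P^{(k)}$ as a finite sum of products of the functions $p^{(j)}$ for $1\le j\le k$ and $P$, $P',\dots,P^{(k-1)}$. Alternatively, Faà di Bruno gives $P^{(k)}$ as a polynomial in $p',\dots,p^{(k)}$ times powers of $P$. In either form every factor is in $C^{\delta/2}[0,T]$ by the induction hypothesis, so Statement 3 (that is, the algebra property) closes the induction.

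The only slightly delicate step is justifying that $p$ really has $N$ classical derivatives, so that Leibniz and Faà di Bruno apply. This is implicit in the hypothesis that $p^{(k)}$ exists in $C^{\delta/2}[0,T]$, and likewise for $q$. For $r$, the hypothesis that $\partial_t^k r$ exists and is continuous up to $\overline{Q}_T$ is what makes differentiation under the integral sign legitimate.
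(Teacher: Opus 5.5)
Your proof is correct. The paper omits the argument, calling these facts standard, and what you give is that standard argument: differentiation under the integral sign over the bounded $\Omega$, with the time-H\"{o}lder bound read off from the parabolic seminorm at fixed $x$, together with the algebra and reciprocal properties of $C^{\delta/2}[0,T]$ combined with Leibniz and an induction through $P'=-p'P^2$. Your caveat that Statement 1 needs $\partial_t^k r$ to be controlled up to $\overline{Q}_T$ is the same point the paper makes in the remark following Theorem~\ref{regularity}.
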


Now we prove the higher regularity of the solutions.
\begin{theorem}\label{regularity}
Suppose Assumptions (A1)-(A2) hold.  Assume that \(a^{ij},b^i, c, \phi\) satisfy the hypotheses of Lemma~\ref{friedman}, that \(v\) solves the forward problem~\eqref{parab1}, and that
\[
\partial_t^k v\in C^{\delta/2,\delta}(\overline{Q}_T),
\qquad
0\le k\le N+1.
\]  
In addition, assume that $w$ and $\mu$ satisfy
\[
D^\beta_x\partial_t^k w\in C^{\delta/2,\delta}(\overline{Q}_T),\quad \mu^{(k)}\in C^{\delta/2}[0,T] \quad (0\le \abs{\beta}+2k\le M+2, \; 0\le k\le N+1),
\]
and that \(\int_\Omega w(t,x)dx\neq 0\) for all \(t\in[0,T]\).  Define \(h\) and \(u\) by the equations~\eqref{sol_u} and~\eqref{sol_h} used in Theorem~\ref{existence-uniqueness} to solve the inverse problem~\eqref{inv2}--~\eqref{integparab2}.  Then
\[
D^\beta_x\partial_t^k u\in C^{\delta/2,\delta}({Q}_T), \quad (0\le \abs{\beta}+2k\le M+2,\; k\le N+1),
\]
and
\[
h^{(k)}\in C^{\delta/2}[0,T], \quad 0\le k\le N+1.
\]
\end{theorem}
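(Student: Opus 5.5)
The plan is to work directly from the explicit formulas \eqref{sol_u}--\eqref{sol_h} and transfer regularity factor by factor, using Lemma~\ref{friedman} for $v$ and Lemma~\ref{smoother-Holder-operational-closure} for the time-dependent pieces. Write $q(t)=\int_\Omega w(t,x)\,dx$ and $V(t)=\int_\Omega v(t,x)\,dx$, so that $h=(\mu-V)\,q^{-1}$. Throughout, Lemma~\ref{smoother-Holder-operational-closure} is used with $N+1$ in place of $N$.

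First, treat $h$. Since $\partial_t^k w\in C^{\delta/2,\delta}(\overline{Q}_T)$ for $0\le k\le N+1$ (take $\beta=0$, which is allowed because $2k\le 2N+2$ fits inside the stated index range), Statement 1 gives $q^{(k)}\in C^{\delta/2}[0,T]$ for $k\le N+1$. By hypothesis $\partial_t^k v\in C^{\delta/2,\delta}(\overline{Q}_T)$ for $k\le N+1$, so the same statement gives $V^{(k)}\in C^{\delta/2}[0,T]$. Since $q$ never vanishes, Statement 2 yields $(q^{-1})^{(k)}\in C^{\delta/2}[0,T]$. Linearity gives $(\mu-V)^{(k)}\in C^{\delta/2}$, and Statement 3 applied to the product gives $h^{(k)}\in C^{\delta/2}[0,T]$ for $0\le k\le N+1$.

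Next, treat $u=hw+v$. For $v$, Lemma~\ref{friedman} directly gives $D^\beta_x\partial_t^k v\in C^{\delta/2,\delta}(Q_T)$ for $|\beta|+2k\le M+2$ and $k\le N+1$. For the product, $D_x^\beta$ does not act on $h$, so Leibniz' rule gives
\[
D^\beta_x\partial_t^k (hw)=\sum_{j=0}^k\binom{k}{j}h^{(j)}(t)\,D^\beta_x\partial_t^{k-j}w(t,x).
\]
Each index pair $(\beta,k-j)$ still satisfies $|\beta|+2(k-j)\le M+2$ and $k-j\le N+1$, so every $w$-factor lies in $C^{\delta/2,\delta}$ by hypothesis. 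Every $h^{(j)}$ lies in $C^{\delta/2}[0,T]$ by the first step. It remains to note that the product of a bounded function in $C^{\delta/2}[0,T]$, regarded as a function of $(t,x)$, with a bounded function in $C^{\delta/2,\delta}$ belongs to $C^{\delta/2,\delta}$. This follows from $|ab(z_1)-ab(z_2)|\le |a|_0|b(z_1)-b(z_2)|+|b|_0|a(t)-a(s)|$ together with $|t-s|^{\delta/2}\le\rho^\delta(z_1,z_2)$. Summing the terms gives the claim for $u$.

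The only delicate point is bookkeeping. One must check that the hypotheses supply exactly the derivatives needed. In particular, the time-integral step needs $\partial_t^k v$ up to $k=N+1$, which is why this is assumed explicitly rather than extracted from Lemma~\ref{friedman} (it would require $2N+2\le M+2$). One must also check that $h$ multiplies $w$ only through pure time derivatives, so that no spatial regularity of $h$ is ever required. I expect no genuine analytic obstacle beyond the elementary product estimate above.
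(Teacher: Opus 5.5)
Your route is the paper's: build $h=(\mu-V)\,q^{-1}$ one operation at a time using Statements 1--3 of Lemma~\ref{smoother-Holder-operational-closure}, then expand $D^\beta_x\partial_t^k(hw)$ by Leibniz' rule. Your explicit product estimate spells out what the paper leaves as ``every term has the desired regularity.'' The trouble is the step you yourself flagged as bookkeeping. Your claim that $\beta=0$, $k\le N+1$ ``fits inside the stated index range'' is false. The hypothesis on $w$ is indexed jointly by $|\beta|+2k\le M+2$ and $k\le N+1$. With $\beta=0$ it gives $\partial_t^k w\in C^{\delta/2,\delta}(\overline{Q}_T)$ only for $2k\le M+2$, so $\partial_t^{N+1}w$ is available only when $M\ge 2N$. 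That is exactly the restriction you correctly cited as the reason $\partial_t^k v$ must be assumed separately. (If the index condition is read as applying to $\mu$ too, $\mu^{(N+1)}$ is restricted in the same way.)

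This is not cosmetic. Take $M=0$, $N=1$, $(a^{ij})$ the identity, $b^i=c=\phi=0$, $g=0$, and $u_0$ constant, so $v\equiv u_0$. Let $\mu(t)=\mu(0)+t$ and $w=1+f(t)$ with $f(t)=\varepsilon|t-t_0|^{1+\delta/2}$, $t_0\in(0,T)$. Every hypothesis holds: $\partial_t w=f'\in C^{\delta/2}[0,T]$ and all spatial derivatives of $w$ vanish. Yet $h(t)=t/\bigl(|\Omega|(1+f(t))\bigr)$ is not twice differentiable at $t_0$, since otherwise $1+f=t/(|\Omega|h)$ would be. So $h''\in C^{\delta/2}[0,T]$ fails.

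The paper's proof passes over this point silently; it simply cites Statement 1 for $t\mapsto\int_\Omega w\,dx$. So the defect lies in the statement rather than in your strategy. The repair is to assume $M\ge 2N$, or, in parallel with the hypothesis on $v$, to assume $\partial_t^k w\in C^{\delta/2,\delta}(\overline{Q}_T)$ and $\mu^{(k)}\in C^{\delta/2}[0,T]$ for all $0\le k\le N+1$. With that addition your argument is complete and coincides with the paper's.

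The conclusion for $u$ survives even as stated. In the Leibniz expansion only $h^{(j)}$ with $2j\le M+2$ appear, and these need only $\partial_t^i w$ and $\mu^{(i)}$ with $2i\le M+2$, which the hypotheses do provide. Only the claim $h^{(k)}\in C^{\delta/2}[0,T]$ for $M+2<2k\le 2N+2$ requires the extra assumption.
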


\begin{proof}
By Lemma~\ref{friedman}, \(D^\beta_x\partial_t^k v\in C^{\delta/2,\delta}(\overline{Q}_T)\) for all \( (0\le \abs{\beta}+2k\le M+2, k\le N+1)\).  For \(h\) we take equation~\eqref{sol_h} one operation at a time.  We are checking that all derivatives up to order \(N+1\) are in  \(C^{\delta/2}[0,T]\).  This smoothness holds for the functions \(t\mapsto\int_\Omega v(t,x) dx\) and \(t\mapsto\int_\Omega w(t,x) dx\) by Lemma~\ref{smoother-Holder-operational-closure} (Statement 1), and then for \(t\mapsto\left(\int_\Omega w(t,x) dx\right)^{-1}\) by Statement 2 of the same Lemma.  The smoothness of \(h\) then follows from linearity and Statement 3  of Lemma~\ref{smoother-Holder-operational-closure}.

For \(u(t,x)=h(t)w(t,x)+v(t,x)\), we can write
\[
D^\beta_x\partial_t^k(hw) = \sum_{j=0}^k\binom{k}{j}h^{(j)}(t)D^\beta_x\partial_t^{k-j}w(t,x).
\]
Since every term to the right-hand side has the desired H\"{o}lder regularity under our assumptions, we are done.
\end{proof}

\begin{remark}
Friedman's Theorem~11 provides sufficient conditions for the corresponding
interior regularity of the forward solution $v$ in $Q_T$. Higher regularity
up to the parabolic boundary, however, generally requires corresponding
higher regularity of the initial and boundary data together with the
appropriate higher-order compatibility conditions.

In the present inverse problem, additional regularity of $v$ up to
$\overline{Q}_T$ is required in the time variable because the reconstruction
formula for $h$ contains the spatial integral
\[
t\longmapsto \int_\Omega v(t,x)\,dx.
\]
Although Friedman's theorem guarantees the existence and H\"older continuity
of the higher derivatives of $v$ at interior points of $Q_T$, interior
regularity alone does not provide sufficient control of these derivatives as
$x\to\partial\Omega$. We therefore assume, in addition, that
\[
\partial_t^k v\in C^{\delta/2,\delta}(\overline{Q}_T),
\qquad 0\le k\le N+1.
\]
This assumption permits differentiation under the spatial integral and yields
\[
\frac{d^k}{dt^k}\int_\Omega v(t,x)\,dx
=
\int_\Omega \partial_t^k v(t,x)\,dx,
\qquad 0\le k\le N+1,
\]
with the resulting functions possessing the required temporal H\"older
regularity.
\end{remark}

\section{Regularization and Denoising for the Discrete Inverse Problem}
\label{regularization}
In this section, we illustrate why identity Tikhonov regularization is ineffective in the case of a well-conditioned operator. Then we show that combining Tikhonov regularization with the GSVD provides a denoising strategy.

To this end, notice that~\eqref{sol_h} can be written as
\[
h(t)\int_\Omega w(t,x)\,dx = \mu(t) - \int_\Omega v(t,x)\,dx.
\]
This in turn can be written in operator form as
\begin{equation}\label{matrixeq}
(Ah)(t) = H(t), \quad 0\le t<T,
\end{equation}
where
\[
(Ah)(t):= m(t)h(t),\quad m(t):= \int_\Omega w(t,x)\,dx, \quad H(t) = \mu(t) - \int_{\Omega}v(x,t)\,dx,
\]
and $A: = m(t)I$, where $I$ is the identity matrix. If $m(t) = m = \mathrm{Cte}$, then this shows that system~\eqref{matrixeq} is exact and well conditioned. The matrix $A$ is the identity matrix up to the scaling factor $m$. 

In practice, the data is almost always noisy. So a random noise will be added to the right-hand side of~\eqref{matrixeq} for simulation. If $A$ is ill-conditioned or singular, or the system is underdetermined, small noise can blow up in the naïve least-squares solution.

Tikhonov regularization addresses this by solving the optimization problem
\begin{equation}\label{TikhonovReg}
h_\alpha = \argmin_{h^*}\{\Vert Ah^*-H^{\epsilon}\Vert_2^2 + \alpha \Vert Lh^*\Vert_2^2\}
\end{equation}
where $H^{\epsilon} = H + \epsilon$, and $\epsilon$ is a random noise with normal distribution with mean $E[\epsilon]$ zero and standard deviation $\sigma_\epsilon$
\begin{equation}\label{std}
E[\epsilon] = 0,\quad \sigma_\epsilon = p\times\max_{t\in [0,T]}|H(t)|,
\end{equation}
and $p$ is the percentage of noise. 

The regularizer $L$ is either the identity matrix $I$ or a differential regularization matrix obtained by discretizing a differential operator. The residual/misfit term $\|Ah^*-H^\epsilon\|$ enforces consistency with the data while the penalty/prior term $\|Lh^*\|$ encodes what one prefers among all solutions consistent with the data: small norm ($L=I$), smooth (if $L$ approximates a derivative). The regularization parameter $\alpha$ balances data fit versus prior.

The Tikhonov regularized solution to the minimization problem~\eqref{TikhonovReg} is given by
\begin{equation}\label{TikhonovSol}
h_{\alpha} = \left(A^TA + \alpha(L^TL)\right)^{-1}A^TH^{\epsilon},
\end{equation}
where the superscript $T$ denotes the transpose operator. The estimation of the regularization parameter $\alpha$ is not a simple problem. Our simulations use the discrepancy principle. Numerically, it consists on finding $\alpha$ such that the norm of the residual $\Vert Ah_{\alpha}-H^{\epsilon}\Vert_2$ and the noise level $\Vert \epsilon\Vert_2$ are equal~\cite{morozov1966solution, Vogel02}. Graphically, $\alpha$ is the intersection of the graphs of $\Vert Ah_{\alpha}-H^{\epsilon}\Vert_2$ and $\Vert \epsilon\Vert_2$ plotted against values of $\alpha$ in a log-log scale. 

\subsection{Limitations of Tikhonov regularization with $L=I$ for well-conditioned operators}
We show why Tikhonov regularization with $L=I$ does not help when the condition number of the matrix $A$ is $\kappa(A) = 1$. The result is as follows.

\begin{proposition}\label{prop:Tikhonov_with_L=I}
Let $A\in\mathbb{R}^{m\times n}$ satisfy $\kappa(A)=1$. 
Then Tikhonov regularization with $L=I$ applies the same shrinkage
factor to all singular modes of $A$. Consequently, it cannot selectively
suppress rough or noise-dominated components while preserving other
components of the solution.
\end{proposition}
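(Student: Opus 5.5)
The plan is to diagonalize the Tikhonov solution~\eqref{TikhonovSol} with $L=I$ using the singular value decomposition of $A$, and then read off the filter factors. Write $A=U\Sigma V^T$ with $U\in\mathbb{R}^{m\times m}$, $V\in\mathbb{R}^{n\times n}$ orthogonal and singular values $\sigma_1\ge\dots\ge\sigma_r>0$, $r=\rank A$. The hypothesis $\kappa(A)=\sigma_1/\sigma_r=1$ forces $\sigma_1=\dots=\sigma_r=:\sigma$. We interpret $\kappa$ on the nonzero singular values; in the setting of~\eqref{matrixeq} we have $A=mI$ with $r=n$ and $\sigma=|m|$. Consequently $A^TA=\sigma^2 V_rV_r^T$, where $V_r$ collects the first $r$ right singular vectors. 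This is just $\sigma^2$ times the orthogonal projector onto the row space of $A$.

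Next, substitute this into $h_\alpha=(A^TA+\alpha I)^{-1}A^TH^{\epsilon}$. Since $A^TH^\epsilon=\sum_{i\le r}\sigma (u_i^TH^\epsilon)v_i$ lies in the span of $v_1,\dots,v_r$, and $A^TA+\alpha I$ acts there as multiplication by $\sigma^2+\alpha$, we get
\[
h_\alpha=\sum_{i=1}^r \frac{\sigma^2}{\sigma^2+\alpha}\,\frac{u_i^TH^\epsilon}{\sigma}\,v_i=\frac{\sigma^2}{\sigma^2+\alpha}\,A^{\dagger}H^\epsilon .
\]
The standard filter factors are $f_i=\sigma_i^2/(\sigma_i^2+\alpha)$. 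Here they are all equal to the single number $f=\sigma^2/(\sigma^2+\alpha)\in(0,1]$. This is the first claim: every singular mode is shrunk by the same factor. In the square case it reads $h_\alpha=\frac{m^2}{m^2+\alpha}\cdot\frac{1}{m}H^\epsilon$.

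For the consequence, split $H^\epsilon=H+\epsilon$, so that $h_\alpha=f\,h_{\rm true}+f\,A^\dagger\epsilon$ (when $H$ is in the range of $A$). Any decomposition of $h$ into "smooth" and "rough" components, for instance in a Fourier or derivative-eigenvector basis, is taken entirely within the common singular subspace, because every orthonormal basis of $\mathrm{span}\{v_i\}$ is an equally valid set of right singular vectors. Hence the noise component in any direction $z$ is damped by exactly the same factor $f$ as the signal component in that direction. The ratio of noise to signal in each component, $|z^T A^\dagger\epsilon|/|z^T h_{\rm true}|$, is therefore independent of $\alpha$. So the regularized solution is just a uniformly rescaled naive solution, and no choice of $\alpha$ can suppress rough components while keeping smooth ones.

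The only delicate point is making the word "selectively" precise. I would state it as the invariance of the signal-to-noise ratio in every direction, together with the observation that $h_\alpha-h_{\rm true}=-(1-f)h_{\rm true}+fA^\dagger\epsilon$. This shows that decreasing the noise term necessarily introduces a proportional bias in every component. Everything else is a routine SVD computation.
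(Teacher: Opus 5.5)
Your proof is correct and follows essentially the paper's route: the SVD of $A$, the observation that $\kappa(A)=1$ collapses the Tikhonov filter factors $\sigma_i^2/(\sigma_i^2+\alpha)$ to a single $f=\sigma^2/(\sigma^2+\alpha)$, and hence $h_\alpha=f\,h_{LS}$. Your formalization of ``not selective'' as $\alpha$-invariance of the signal-to-noise ratio in every direction is cleaner than the paper's bias--variance discussion, and it avoids the paper's closing claim that $E\|h_\alpha-h^*\|^2$ increases with $\alpha$, which is false for small $\alpha$ (the derivative at $\alpha=0$ is $-2r\sigma_\epsilon^2/\sigma^4<0$).
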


\begin{proof}
Since $A\in\mathbb{R}^{m\times n}$, it can be expressed in terms of its Singular Value Decomposition (SVD) as $A = U\Sigma V^T$, where $U\in\mathbb{R}^{m\times m}$ and $V\in\mathbb{R}^{n\times n}$ are orthogonal matrices of left and right singular vectors $u_i$ and $v_i$, respectively, and $\Sigma\in\mathbb{R}^{m\times n}$ is a diagonal matrix with nonnegative entries, $\Sigma = \diag(\sigma_1,\cdots,\sigma_r,0,\cdots)$, where $\sigma_1\ge\cdots\ge\sigma_r>0$ are singular values of $A$ and $r = \rank(A)$. Each triple $(\sigma_i, u_i, v_i)$ describes one ``mode" (or channel) of action $Av_i = \sigma_iu_i$. So one can think of the $i$-th mode as the ``input direction $v_i$'' mapping to ``output direction $u_i$'' scaled by $\sigma_i$.

Using the SVD formula, the fact that the right singular vectors $\{v_j\}$ form an orthonormal basis for $\mathbb{R}^n$, $H = Ah^*$, with $h^*$ the true solution, and $U$ is a matrix with orthonormal columns, we can write 
\[
h^* = \sum_{i=1}^n(v^T_ih^*)v_i,\quad\text{and}\quad u^T_iH = \sigma_i(v^T_ih^*).
\]
In this case, the least-squares solution (LS) to~\eqref{TikhonovSol} (with $\alpha = 0$) is
\begin{equation}\label{LS}
h_{LS} = \sum_{i = 1}^r\frac{1}{\sigma_i}\left[(u^T_i H) + (u^T_i\epsilon) \right]v_i = \sum_{i = 1}^r\left[\left(v^T_i h^*\right) + \frac{1}{\sigma_i}(u^T_i\epsilon) \right]v_i.
\end{equation}
Observe that if $\sigma_i$ is small, the inversion formula~\eqref{LS} multiplies random noise in those directions by a huge factor $1/\sigma_i$. The noisy solution fluctuate wildly from sample to sample, giving rise to variance, which comes from the propagation of random noise $\epsilon$ through the inverse operator (noise amplification). Tikhonov regularization~\eqref{TikhonovSol} replaces the unbounded filter factor $1/\sigma_i$ by the bounded filter factor $\theta_i(\alpha): = \sigma_i/(\sigma_i^2+\alpha)$:
\begin{equation}\label{Tik}
    h_\alpha = \sum_{i = 1}^r\frac{\sigma_i}{\sigma_i^2+\alpha}\left[(u^T_i H) + (u^T_i\epsilon) \right]v_i = \sum_{i = 1}^r\left[\frac{\sigma_i^2}{\sigma_i^2+\alpha}\left(v^T_i h^*\right) + \frac{\sigma_i}{\sigma_i^2+\alpha}(u^T_i\epsilon) \right]v_i.
\end{equation}
For large singular values $\sigma_i\gg\sqrt{\alpha}$, $\theta_i(\alpha)\approx 1/\sigma_i$ and $\vartheta_i(\alpha) :=\sigma_i\theta_i(\alpha)\approx 1$. These show that there is almost no effect compared to the $h_{LS}$ stable solution for the both the signal and the noise components. However, for small singular values $\sigma_i\ll\sqrt{\alpha}$, $\theta_i(\alpha)\ll 1/\sigma_i$, implying a strong damping of small singular values, strongly suppressing the noise/variance (contributions from these directions are shrunk). 

Taking the expected value from~\eqref{Tik} we have
\[
h_\alpha - E[h_\alpha] = \sum_{i=1}^n\frac{\sigma_i}{\sigma_i^2 + \alpha}(u_i^T\epsilon)v_i.
\]
Next, we show that the total variance decreases with $\alpha$. Since $h_\alpha$ is a random variable as it depends on random noise $\epsilon$, and because $\cov(\epsilon) = E\left[\epsilon\epsilon^T\right] = \sigma_\epsilon^2 I$, we have along direction $v_i$ (in mode $i$) that $v^T_i(h_\alpha - E[h_\alpha]) = \theta_i(\alpha)(u^T_i\epsilon)$. Therefore, the variance in mode $i$ is,
\begin{align*}
\var[v^T_ih_\alpha] &= E\left[\left(v^T_ih_\alpha - E[v^T_ih_\alpha] \right)^2\right] = \theta_i(\alpha)^2E\left[\left(u^T_i\epsilon\right)^2\right] = \theta_i(\alpha)^2\var\left[u^T_i\epsilon\right] \\
&= \theta_i(\alpha)^2u^T_i\cov[\epsilon]u_i = \theta_i(\alpha)^2\sigma_\epsilon^2 = \sigma_\epsilon^2\frac{\sigma_i^2}{(\sigma_i^2+\alpha)^2}.
\end{align*}
This shows how much smaller the variance is in mode $i$ under Tikhonov compared to the LS in that mode. 
For each $i$, define $\chi_i(\alpha): = \sigma_i^2/(\sigma_i^2+\alpha)^2$. Then $\chi_i'(\alpha) = -2\sigma_i^2/(\sigma_i^2+\alpha)^3<0$, showing that every directional variance $\sigma_\epsilon^2\chi_i(\alpha)$ strictly decreases with $\alpha$. Hence, the total variance $\sum_{i=1}^r\var[v^T_ih_\alpha]$ decreases with $\alpha$. Alternatively, since for the unregularized estimator ($\alpha=0$) we have  $\var[v^T_ih_{LS}] = \sigma_\epsilon^2/\sigma_i^2$, it follows that $\var[v^T_ih_\alpha]\le \var[v^T_ih_{LS}]$.

Similarly, for small singular values, $\vartheta_i(\alpha)\ll 1$, showing that the true signal is heavily shrunk. This signal shrinkage is the bias introduced during regularization. Though the split~\eqref{Tik} shows the signal and noise components, in practice you cannot cleanly separate the signal part from the noise part. Bias is the precise measure of signal shrinkage because it's noise-free. In mode $i$ we have from the first equation in~\eqref{std} and~\eqref{Tik},
\begin{align*}
\bias_i(h_\alpha) & = E[v^T_ih_\alpha] - v^T_ih^* = -\left(\frac{\sigma_i^2}{\sigma_i^2+\alpha}-1\right)(v^T_ih^*).
\end{align*}
Bias quantifies exactly how much of the signal in mode $i$ is systematically lost due to regularization. The total bias is
\[
\bias(h_\alpha) = E[h_\alpha] - h^* = -\sum_{i=1}^r\left(\frac{\sigma_i^2}{\sigma_i^2+\alpha}-1\right)(v^T_ih^*)v_i.
\]
Observe that the bias starts at $0$ when $\alpha = 0$ and increases as $\alpha$ increases because each factor $-\alpha/(\sigma_i^2+\alpha)$ becomes more negative in magnitude (approaches $-1$ as $\alpha\to\infty$). In other words, the gap between $E[h_\alpha]$ and $h^*$ increases.

In conclusion, Tikhonov targets the ill-posed directions (modes with small $\sigma_i$) and significantly reduces the noise there, strongly suppressing the variance, while leaving well-posed directions almost untouched. But it also shrinks the true signal there (bias), to prevent noise from exploding there. 

Now, if $\kappa(A) = \sigma_{max}/\sigma_{min} = 1$, then all nonzero singular values of $A$ satisfy $\sigma_i = \sigma>0$ for $i=1,\cdots, r$. The matrix $A$ is perfectly conditioned. There is no instability. In this case the unregularized LS solution is the best estimator. When $\alpha I$ is added as in~\eqref{TikhonovSol}, we are biasing the solution away from the true LS solution. In this case, $h_{\alpha} = \frac{\sigma^2}{\sigma^2+\alpha}h_{LS}$, and the shrinkage factor $\sigma^2/(\sigma^2+\alpha)$ is uniform, that is, every mode is shrunk the same. It adds bias everywhere equally. However, because there are no small singular values to cause variance blow-up, the unregularized variance $\sigma_\epsilon^2/\sigma^2$ is already modest and balanced. Reducing it further to $\sigma_\epsilon^2\left(\frac{\sigma}{\sigma^2+\alpha}\right)^2$ yields only marginal benefit. Although variance is reduced, it is already modest in this setting and therefore never dominates the total risk. Thus, the bias term $\left(1-\frac{\sigma^2}{\sigma^2+\alpha}\right)h^*$ is the main effect and its norm
\[
\|\bias(h_\alpha)\|^2 = \left(1-\frac{\sigma^2}{\sigma^2+\alpha}\right)^2\|h^*\|^2
\]
grows quadratically with $\alpha$, regardless of noise, while the variance term $\var(h_\alpha) = r\sigma_\epsilon^2\left(\frac{\sigma}{\sigma^2+\alpha}\right)^2$ decreases with $\alpha$, but wasn't a problem to begin with (it starts from a reasonable value already). Consequently, the error
\[
E\left[\|h_\alpha - h^*\|^2\right] = \|\bias(h_\alpha)\|^2 + \var(h_\alpha)
\]
increases with $\alpha$. Hence regularization is counterproductive. 

This result highlights that Tikhonov regularization is designed to counteract
instabilities associated with small singular values. When such instabilities
are absent, as in the perfectly conditioned case $\kappa(A)=1$, regularization
introduces bias without a compensating variance reduction and therefore degrades
the reconstruction.
\end{proof}

Although identity Tikhonov regularization may reduce the variance
of the noisy reconstruction, it does so by shrinking all singular modes
uniformly and therefore has no mechanism for selectively penalizing
rough or noise-dominated components. This motivates the use of a
nonidentity regularization operator $L$, considered next.

\subsection{Denoising with Tikhonov regularization and GSVD for well-conditioned operators}
In this section, we show that if $L\ne I$ and $\kappa(A) = 1$, Tikhonov regularization combined with Generalized Singular Value Decomposition (GSVD) can be a great tool to denoise the noisy signal. The result is given below.
\begin{proposition}\label{prop:Tikhonov_LnoteqI}
Let $A\in\mathbb{R}^{m\times n}$ and $L\in\mathbb{R}^{q\times n}$, and assume that the stacked matrix $[A\; L]$ has full column rank $n$, so that the Tikhonov problem~\eqref{TikhonovReg} admits a unique minimizer for every $\alpha>0$. 
Then, even when $\kappa(A)=1$, Tikhonov regularization with $L\neq I$ reduces variance in the reconstruction by selectively damping components associated with the penalty operator $L$, while leaving unpenalized modes unaffected.
\end{proposition}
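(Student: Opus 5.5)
The plan is to mirror the proof of Proposition~\ref{prop:Tikhonov_with_L=I}, replacing the SVD of $A$ by the GSVD of the pair $(A,L)$. Since $[A\;L]$ has full column rank, the GSVD applies. There exist matrices $U\in\mathbb{R}^{m\times m}$ and $V\in\mathbb{R}^{q\times q}$ with orthonormal columns and a nonsingular $X\in\mathbb{R}^{n\times n}$ such that
\[
A = U\,\mathrm{C}\,X^{-1},\qquad L = V\,\mathrm{S}\,X^{-1},
\]
where $\mathrm{C}=\diag(c_1,\dots,c_n)$ and $\mathrm{S}=\diag(s_1,\dots,s_n)$ (padded by zero rows as needed), $c_i,s_i\ge 0$, and $c_i^2+s_i^2=1$. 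Write $x_i$ for the columns of $X$ and $\gamma_i=c_i/s_i$ for the generalized singular values. Modes with $s_i=0$ are exactly those in $\ker L$, so they are the unpenalized modes.

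First I substitute the GSVD into the normal equations $(A^TA+\alpha L^TL)h_\alpha=A^TH^\epsilon$. Since $A^TA+\alpha L^TL = X^{-T}(\mathrm{C}^T\mathrm{C}+\alpha\,\mathrm{S}^T\mathrm{S})X^{-1}$, the solution is
\[
h_\alpha=\sum_{i=1}^n \frac{c_i}{c_i^2+\alpha s_i^2}\,\bigl(u_i^TH+u_i^T\epsilon\bigr)\,x_i .
\]
Using $H=Ah^*$, I write $h^*=\sum_i \xi_i x_i$ with $\xi_i=(X^{-1}h^*)_i$, so that $u_i^TH=c_i\xi_i$. This gives the split
\[
h_\alpha=\sum_{i}\Bigl[f_i(\alpha)\,\xi_i+\frac{f_i(\alpha)}{c_i}\,u_i^T\epsilon\Bigr]x_i,\qquad f_i(\alpha)=\frac{c_i^2}{c_i^2+\alpha s_i^2}=\frac{\gamma_i^2}{\gamma_i^2+\alpha}.
\]
For $s_i=0$ (and then $c_i=1$) we get $f_i\equiv1$, so those components coincide with the least-squares ones: they are unaffected, with no bias and no variance change. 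For $s_i>0$, the factor $f_i$ depends on $\gamma_i$, which is determined by $L$, and not merely on the singular values of $A$.

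Next I compute the variance mode by mode, in the coordinates $X^{-1}h_\alpha$, using $\cov(\epsilon)=\sigma_\epsilon^2I$ exactly as in the proof of Proposition~\ref{prop:Tikhonov_with_L=I}:
\[
\var\bigl[(X^{-1}h_\alpha)_i\bigr]=\sigma_\epsilon^2\,\frac{c_i^2}{(c_i^2+\alpha s_i^2)^2}.
\]
Differentiating in $\alpha$ gives $-2\sigma_\epsilon^2 c_i^2 s_i^2/(c_i^2+\alpha s_i^2)^3\le0$. This is strictly negative iff $s_i>0$, and the reduction grows with $s_i$, equivalently as $\gamma_i$ becomes small. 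The bias in mode $i$ is $-(1-f_i)\xi_i$, which vanishes for $s_i=0$. When $\kappa(A)=1$, the factors $f_i$ are still nonuniform because the $\gamma_i$ differ. For derivative-type $L$, the components with small $\gamma_i$ are the rough ones, so the shrinkage is selective. This contrasts with the uniform factor $\sigma^2/(\sigma^2+\alpha)$ found in Proposition~\ref{prop:Tikhonov_with_L=I}.

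The main obstacle is the bookkeeping with the nonorthogonal basis $\{x_i\}$. The total variance $E\|h_\alpha-E h_\alpha\|_2^2$ is not simply the sum of the modal variances. To handle this, I would state the variance reduction in $X^{-1}$-coordinates, i.e.\ in the norm $\|X^{-1}\cdot\|_2$. Alternatively, I would bound the Euclidean variance by $\|X\|_2^2$ times that quantity and note that $\cov(h_\alpha)=X D_\alpha X^T$ decreases in the Loewner order, since the diagonal matrix $D_\alpha$ decreases entrywise. This gives monotone decrease of the trace. I would also handle the rectangular shapes ($q<n$, or $m<n$) by padding $\mathrm{C}$ and $\mathrm{S}$, which the full-rank hypothesis makes consistent.
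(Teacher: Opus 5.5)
Your proposal is correct and is essentially the paper's own argument: the same GSVD (your $X$ is the paper's $Z$), the same filter factors $\omega_i(\alpha)=c_i^2/(c_i^2+\alpha s_i^2)$, bias and variance computed in the dual coordinates $(Z^{-1}h)_i=\tilde z_i^Th$, and the same conclusion that the modes with $s_i=0$, which span $\ker L$, are untouched; your Loewner-order argument for the total variance is equivalent to the paper's explicit formula $\tr\cov(h_\alpha)=\sum_i\psi_i(\alpha)^2\sigma_\epsilon^2\|z_i\|^2$. The one remark to tighten is your claim that the $\gamma_i$ ``differ'' when $\kappa(A)=1$: for square $A$ with $A^TA=\sigma^2I$ one has $L^TLz_i=\sigma^2(s_i^2/c_i^2)z_i$, so the shrinkage is nonuniform exactly when $L^TL$ is not a multiple of $I$ (for instance, it is uniform for $L=2I$); the paper also leaves this condition implicit, but it holds for the difference penalties used there because they have a nontrivial kernel.
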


\begin{proof}
By the GSVD of the pair $(A,L)$, there exist orthogonal matrices $U\in\mathbb{R}^{m\times m}$, $V\in\mathbb{R}^{q\times q}$, an invertible $Z\in\mathbb{R}^{n\times n}$, and diagonal nonnegative matrices $C = \diag(c_1,\cdots,c_n)$, $S = \diag(s_1,\cdots,s_n)$ with $c_i^2 + s_i^2 = 1$ such that
\begin{equation}\label{GSVD}
A = UCZ^{-1},\quad L = VSZ^{-1}.
\end{equation}
The triples $(c_i, s_i,z_i)$, where $z_i$ are columns of $Z$, are the GSVD modes. The generalized singular values are $\gamma_i = c_i/s_i$ for those indices with $s_i\ne 0$. The condition number in GSVD is defined as $\max_i\gamma_i/\min_i\gamma_i$.

Now, let $h = Zy$. Using~\eqref{GSVD} and orthogonality we have
\begin{align*}
J(y) &= \|Ah - H^\epsilon\|^2_2 + \alpha\|Lh\|^2_2 = \|UCy-H^\epsilon\|^2_2 + \alpha\|VSy\|^2_2 \\
& = \|U^T(UCy-H^\epsilon)\|^2_2 + \alpha\|V^T(VSy)\|^2_2 = \|Cy-U^TH^\epsilon)\|^2_2 + \alpha\|Sy\|^2_2\\
& = \sum_{i = 1}^n\left[ (c_iy_i-\beta_i)^2 + \alpha (s_iy_i)^2\right],\quad\text{where $\beta_i = u^T_iH^\epsilon$}.
\end{align*}
For each $i$, we minimize $j_i(y_i) = (c_iy_i-\beta_i)^2 + \alpha (s_iy_i)^2$ with respect to $y_i$ to get
\[
y_i = \frac{c_i}{c_i^2+\alpha s_i^2}\beta_i\quad\text{and so}\; y = \sum_{i = 1}^n\psi_i(\alpha)\beta_ie_i\;\text{and $e_i$ is the standard $i$-th basis vector, where}
\]
\[
\psi_i(\alpha) = \frac{c_i}{c_i^2 + \alpha s_i^2}.\quad\text{Let also\; $\omega_i(\alpha) = \frac{c_i^2}{c_i^2 + \alpha s_i^2}$}.
\]
Let $\{\tilde{z}_i\}$ defined by $\tilde{z}_i:=(Z^{-1})^Te_i$ be the dual (bi-orthogonal) basis which satisfies $\tilde{z}_i^Tz_j = \delta_{ij}$. 
Now, observe that from the first equation in~\eqref{GSVD} and the dual directions we have for each $i$, 
\[
u_i^TAh^* = (Ue_i)^TAh^* = e_i^TCZ^{-1}h^* = (C^Te_i)^TZ^{-1}h^* = c_i\tilde{z}_i^Th^*.
\]
Since $Ze_i = z_i$ it follows that
\begin{equation}\label{h_alpha}
h_\alpha = Zy = \sum_{i = 1}^n\psi_i(\alpha)\left(u_i^TAh^* + u_i^T\epsilon\right)z_i = \sum_{i=1}^n\left[ \omega_i(\alpha)(\tilde{z}_i^Th^*) + \frac{1}{c_i}\omega_i(\alpha)(u_i^T\epsilon)\right]z_i.
\end{equation}
Notice that in the generalized case $L\ne I$, the filters $\psi_i(\alpha)$ depend on $s_i$. So, the shrinkage is accomplished according to the prior/penalty, not according to $A'$s conditioning. 

Applying the dual directions in~\eqref{h_alpha} we have for each $i$,
\[
\tilde{z}_i^Th_\alpha = \omega_i(\alpha)\tilde{z}_i^Th^* + \psi_i(\alpha)u_i^T\epsilon.
\] 
Taking the expected value and recalling that $E[\epsilon] = 0$ yield
\begin{align*}
E[\tilde{z}_i^Th_\alpha] & = \omega_i(\alpha)(\tilde{z}_i^Th^*),
\end{align*}
from which we deduce that the bias along the dual directions is
\begin{equation}\label{bias_GSVD}
\bias(h_\alpha) = E[\tilde{z}_i^Th_\alpha] - \tilde{z}_i^Th^* = (\omega_i(\alpha)-1)\tilde{z}_i^Th^*.
\end{equation}
Thus, modes with large $s_i$ (those emphasized by the penalty) are strongly shrunk, hence more penalized, while modes with $s_i=0$ (those in $\ker L$) are unbiased.

For the directional variance along the dual directions we have
\begin{align}\label{var_GSVD}
\var[\tilde{z}_i^Th_\alpha] & = E\left[\left( \tilde{z}_i^Th_\alpha -E[\tilde{z}_i^Th_\alpha]\right)^2\right] = E\left[\left(\psi_i(\alpha)u_i^T\epsilon \right)^2 \right] = \psi_i(\alpha)^2\var[u_i^T\epsilon]\notag\\
& = \psi_i(\alpha)^2u_i^T\cov(\epsilon)u_i =\psi_i(\alpha)^2u_i^T(\sigma_\epsilon^2I)u_i = \psi_i(\alpha)^2\sigma_\epsilon^2 = \var[\tilde{z}_i^Th_{LS}]\omega_i(\alpha)^2.
\end{align}
Since
\[
\frac{d}{d\alpha}\omega_i(\alpha)^2 = -\frac{2c_i^2s_i^2}{(c_i^2 + \alpha s_i^2)^3} <0 \;\text{for all $\alpha>0$},
\]
then every directional variance $\var[\tilde{z}_i^Th_\alpha]$ decreases with $\alpha$. The total variance (expected squared deviations)
\begin{align*}
E\left[\|h_\alpha -E[h_\alpha]\|^2 \right] &= \mathrm{tr}(\cov(h_\alpha)) = \tr E\left[(h_\alpha - E[h_\alpha]) (h_\alpha - E[h_\alpha])^T\right] = \sum_{i=1}\psi_i(\alpha)^2\sigma_{\epsilon}^2\|z_i\|^2,
\end{align*}
also decreases with $\alpha$.

Now, if $\kappa(A)=1$, not all $c_i$ must be equal. The GSVD balances $A$ and $L$, and the $c_i$ values depend on the interaction between the two operators, not just on $A$'s conditioning. The stabilizing/selection action comes from $s_i$. More specifically, from ~\eqref{h_alpha},~\eqref{bias_GSVD}, and~\eqref{var_GSVD} we have the following cases for the GSVD regime.

\noindent{\bf Case 1:} $s_i = 0$ and $c_i=1$. So $\bias(h_\alpha) = 0$ which implies no signal shrinkage. Moreover, $\var[\tilde{z}_i^Th_\alpha] = \var[\tilde{z}_i^Th_{LS}]$, implying that there is no attenuation of noise. These are the ``unpenalized modes" which are left untouched by regularization.

\noindent{\bf Case 2:} $c_i\to 0$ and $s_i=1$. The signal component in mode $i$ is completely suppressed. $\var[\tilde{z}_i^Th_\alpha] = 0$, so noise in this mode is fully suppressed. Consequently, the solution $h_\alpha$ contains no contributions along these ``fully penalized modes.''

\noindent{\bf Case 3:} $0<c_i<1$ and $s_i=\sqrt{1-c_i^2}>0$ or $0<s_i<1$ and $c_i=\sqrt{1-s_i^2}>0$. This case is the main GSVD regime where Tikhonov regularization balances signal and noise. This is where the trade-off really shows.

For small $\alpha$:
\[
\omega_i(\alpha) = 1-\alpha\frac{s_i^2}{c_i^2} + O(\alpha^2)\approx 1- \alpha\frac{s_i^2}{c_i^2}.
\]
\begin{itemize}
\item If $\alpha s_i^2/c_i^2 \approx 1$, then $\omega_i(\alpha)\ll 1$. Bias increases significantly, which implies strong signal shrinkage in mode $i$. Moreover, $\var[\tilde{z}_i^Th_\alpha] = \var[\tilde{z}_i^Th_{LS}]\omega_i(\alpha)^2$. Thus, variance decreases considerably, implying huge noise reduction in mode $i$. Here $h_{LS}$ denotes the unregularized least-squares solution, corresponding to $\alpha = 0$.
\item If $\alpha s_i^2/c_i^2 < 1$ but not close to 1, then $\omega_i(\alpha)<1$. Bias moderately increases, which implies a slight signal shrinkage in mode $i$. At the same time, variance slightly decreases, showing moderate noise reduction in mode $i$. 
\end{itemize}
\hspace{0.2in} For large $\alpha$: $\omega_i(\alpha)<1$. Thus, signal and noise are both progressively damped in proportion to $\alpha$ in mode $i$. Moreover, $\omega_i(\alpha)\to 0$ as $\alpha\to\infty$, implying full suppression in mode $i$.\\ 

In contrast to the case $L=I$ considered in Proposition~\ref{prop:Tikhonov_with_L=I}, 
the GSVD framework shows that when $L\neq I$, regularization acts along directions determined by the penalty operator rather than by the conditioning of $A$. 
As a result, even if $A$ is well conditioned, Tikhonov regularization can effectively reduce variance by suppressing components associated with large $s_i$, while leaving unpenalized modes intact. 
This selective smoothing mechanism explains the denoising behavior observed in the numerical experiments.
\end{proof}

\section{Numerical results}\label{numerical_results}
In this section, we present the results of our numerical experiments. For simplicity, we assume that $\Omega$ is a domain in $\mathbb{R}^2$, namely, the square $\Omega = [-1,1]\times [-1,1]$. We denote $x\in\Omega$ by $x = (x_1,x_2)$. The final time is taken to be $T = 1$. We use MATLAB to perform our numerical experiments. 
The numerical approximations employ a finite element discretization in space and a backward Euler difference scheme in time to approximate solutions of systems~\eqref{parab1}--\eqref{bcparab1} and~\eqref{inv1}--\eqref{integparab1}, and the right-hand sides of Equations~\eqref{sol_h} and ~\eqref{sol_u}. The reader interested in learning about the Finite Element Method for parabolic problems is encouraged to consult the references~\cite{lewis2004fundamentals, Li-Chen, Thomee}. The PDE toolbox in MATLAB and its documentation on the Finite Element Method would also be an extremely valuable resource. For the remainder of this section, $N$ will denote the number of subintervals in $t$ and $\tau = \tau_{x_1} = \tau_{x_2}$ will denote the mesh size in both $x_1$ and $x_2$ directions. We'll use $\tau = 0.05$ unless otherwise specified.

To check the accuracy of the approximations, we compute the error using the parabolic H\"{o}lder full norm as defined by~\eqref{full_norm}. For a given state function $u$ and its reconstruction $\bar{u}$, let $e(t,x_1,x_2): = u(t,x_1,x_2) - \bar{u}(t,x_1,x_2)$. For the state function $u$, we consider the relative error of the final-time spatial profile (slice $t = T$)
\begin{equation}\label{final_time_norm}
\frac{|e(T,\cdot)|_{1+\delta/2;2+\delta;Q_T}}{|u(T,\cdot)|_{1+\delta/2, 2+\delta; Q_T}},
\end{equation}
where
\[
|e(T,\cdot)|_{1+\delta/2;2+\delta;Q_T} = |e(T,\cdot)|_{0;\Omega} + \sum_{i=1}^d|\partial_{x_i}e(T,\cdot)|_{0;\Omega} + \lbrack e(T,\cdot)\rbrack_{\delta;\Omega}  
\]
and the relative error of the time profile at a fixed point $(x_0,y_0)$
\begin{equation}\label{fixedPoint_norm}
\frac{|e(\cdot,(x_0,y_0))|_{1+\delta/2;2+\delta;Q_T}}{|u(\cdot,(x_0,y_0))|_{1+\delta/2, 2+\delta; Q_T}},
\end{equation}
where
\[
|e(\cdot,(x_0,y_0))|_{1+\delta/2;2+\delta;Q_T} = |e(\cdot,(x_0,y_0))|_{0;(0,T)} + |\partial_te(\cdot,(x_0,y_0))|_{0;(0,T)} + \lbrack\partial_t e(\cdot,(x_0,y_0))\rbrack_{\delta/2;(0,T)}.  
\]

For the source function $h$ and its reconstruction $\bar{h}$, we compute the relative error 
\begin{equation}\label{source_error}
\frac{|h - \bar{h}|_{1+\delta/2;(0,T)}}{|h|_{1+\delta/2;(0,T)}},
\end{equation}
where
\[
|h - h_\alpha|_{1+\delta/2;(0,T)} = |h-h_\alpha|_{0;(0,T)} + |h' - h'_\alpha|_{0;(0,T)} + [h'-h'_\alpha]_{\delta/2;(0,T)}.
\]
The approximate solutions with no noise and no regularization will be denoted as $\tilde{u}$ and $\tilde{h}$. We will denote the noisy approximations with no regularization as $u_0$ and $h_0$. Finally, the regularized solutions will be denoted as $u_\alpha$ and $h_\alpha$.

It is worth noting that compared to~\eqref{full_norm}, in~\eqref{final_time_norm} the time derivatives pieces are dropped as they are irrelevant on a fixed-time slice. Also, since second partial derivatives are not reliable for $P_1$ FEM, they are dropped. Instead, we use the stable spatial smoothness proxy $\lbrack e(T,\cdot)\rbrack_{\delta;\Omega}$. It gives a robust measure of spatial regularity without needing second derivatives and it's controlled by the full norm. On the other side,~\eqref{fixedPoint_norm} follows because the error is a function of time only.

\subsection{A problem with analytic solutions}\label{analytic_sol}
We have constructed the following problem that cannot be found elsewhere in the literature. Problems with analytic solutions are very useful for inverse problems.  Analytic solutions are important for both understanding the ill-posedness of the problem (the dynamic of the system under noisy measurement in the absence of ill-posedness) and serve as another approach of either simulating the data or computing a solution. 
In order to design the system in question, let
\[
w(x_1,x_2) = \cosh\left(\frac{x_1}{\sqrt{2}}\right) \cosh\left(\frac{x_2}{\sqrt{2}}\right) 
\]
chosen so that $\partial_tw = 0$ and $Lw = -\Delta w + w = 0$. Moreover,
\begin{align*}
\frac{\partial w(t,x_1,-1)}{\partial\nu} &= \frac{1}{\sqrt{2}}\sinh\left(\frac{1}{\sqrt{2}}\right)\cosh\left(\frac{x_1}{\sqrt{2}}\right), \, -1\le x_1\leq 1, \,t\in [0,T],\\
\frac{\partial w(t,1,x_2)}{\partial\nu} &= \frac{1}{\sqrt{2}}\sinh\left(\frac{1}{\sqrt{2}}\right)\cosh\left(\frac{x_2}{\sqrt{2}}\right), \, -1\le x_2\leq 1, \,t\in [0,T],\\
\frac{\partial w(t,x_1,1)}{\partial\nu} &= \frac{1}{\sqrt{2}}\sinh\left(\frac{1}{\sqrt{2}}\right)\cosh\left(\frac{x_1}{\sqrt{2}}\right), \, -1\le x_1\leq 1, \,t\in [0,T],\\
\frac{\partial w(t,-1,x_2)}{\partial\nu} &= \frac{1}{\sqrt{2}}\sinh\left(\frac{1}{\sqrt{2}}\right)\cosh\left(\frac{x_2}{\sqrt{2}}\right), \, -1\le x_2\leq 1, \,t\in [0,T],\\
\end{align*}
and
\[
m:=\int_{[-1,1]^2} w(x_1,x_2)\,dx_1\,dx_2 = 8\sinh^2\left(\frac{1}{\sqrt{2}}\right).
\]
Next, define
\[
\phi_{11}(x_1,x_2) = \cos\left(\frac{\pi(x_1+1)}{2}\right) \cos\left(\frac{\pi(x_2+1)}{2}\right).
\]
It follows that  
\[
\frac{\partial \phi_{11}(x_1,x_2)}{\partial\nu} = 0 \;\text{on $\partial [-1,1]$},\quad \int_{[-1,1]^2} \phi_{11}(x_1,x_2)\,dx_1\,dx_2 = 0.
\]
The problem is to recover the source function $h(t)$ and the state function $u(t,x_1, x_2)$ in the system
\begin{align*}
\partial_tu(t,x) - \Delta u(t,x) + u(t,x)
& = h'(t)w(x_1,x_2) + \phi(t,x_1,x_2), \, (t,x=(x_1,x_2))\in Q_T,\\
u(0,x_1,x_2) &= w(x_1,x_2) + \phi_{11}(x_1,x_2), \quad (x_1,x_2)\in \Omega,\\
\frac{\partial u(t,x_1,-1)}{\partial\nu} &= e^{-t}(1+t)\frac{\partial w(t,x_1,-1)}{\partial\nu} + h(t)\frac{\partial w(t,x_1,-1)}{\partial\nu}, \, -1\le x_1\leq 1, \,t\in [0,T],\\ 
\frac{\partial u(t,1,x_2)}{\partial\nu} &= e^{-t}(1+t)\frac{\partial w(t,1,x_2)}{\partial\nu} + h(t)\frac{\partial w(t,1,x_2)}{\partial\nu}, \, -1\le x_2\leq 1, \,t\in [0,T],\\
\frac{\partial u(t,x_1,1)}{\partial\nu} &= e^{-t}(1+t)\frac{\partial w(t,x_1,1)}{\partial\nu} + h(t)\frac{\partial w(t,x_1,1)}{\partial\nu}, \, -1\le x_1\leq 1, \,t\in [0,T],\\
\frac{\partial u(t,-1,x_2)}{\partial\nu} &= e^{-t}(1+t)\frac{\partial w(t,-1,x_2)}{\partial\nu} + h(t)\frac{\partial w(t,-1,x_2)}{\partial\nu}, \, -1\le x_2\leq 1, \,t\in [0,T],\\
\end{align*}
where $\phi(t,x_1,x_2) = - te^{-t}w(x_1,x_2)$, and subject to the integral condition
\begin{equation*}
\int_{-1}^1\int_{-1}^1u(t,x_1,x_2)\,dx_2dx_1 = \mu(t)=8\left(1+te^{-t}\right)\sinh^2\left(\frac{1}{\sqrt{2}}\right),\quad t\in [0,T].
\end{equation*}
The exact solutions of the above system are given by
\begin{equation}\label{exact_guys}
u(t,x_1,x_2) = \left[e^{-t}(1+t) + h(t)\right]w(x_1,x_2) + e^{-(1+\pi^2/2)t}\phi_{11}(x_1,x_2) \;\text{and}\; h(t) = 1-e^{-t}.
\end{equation}
To obtain the approximate solutions, we first compute the baseline solution $v$ which solves the system~\eqref{parab1}--\eqref{bcparab1} with boundary condition
\[
\frac{\partial v}{\partial\nu} = g(t,x_1,x_2) = e^{-t}(1+t)\frac{\partial w}{\partial\nu},
\]
and then proceed with the algorithm as described in Section~\ref{comp_algo}.
This boundary data follows from the fact that 
\[
v(t,x_1,x_2) = u(t,x_1,x_2) - h(t)w(x_1,x_2) = e^{-t}(1+t)w(x_1,x_2) + e^{-(1+\pi^2/2)t}\phi_{11}(x_1,x_2).
\]
For the remainder of this section, we will use $N = 60$ and $\tau = 0.05$. Figure~\ref{fig:exact_No_reg_N60} shows the graphs of the analytic and numerical solutions $u(T,x_1,x_2)$ and $\tilde{u}(T,x_1,x_2)$, $-1\le x_1, x_2\le 1$, respectively, at the final time $T$, their profiles $u(t,-1,-1)$ and $\tilde{u}(t,-1,-1)$, $t\in [0,T]$, at a point chosen arbitrarily, since a similar graph could be obtained at a different point, and the graphs of the analytic and approximate sources $h(t)$ and $\tilde{h}(t)$, respectively, in the case of no noise and no regularization, i.e., $p=0$ and $\alpha = 0$.
\begin{figure}[htbp]
\centering
\includegraphics[width = 0.9\textwidth, height = .4\textheight]{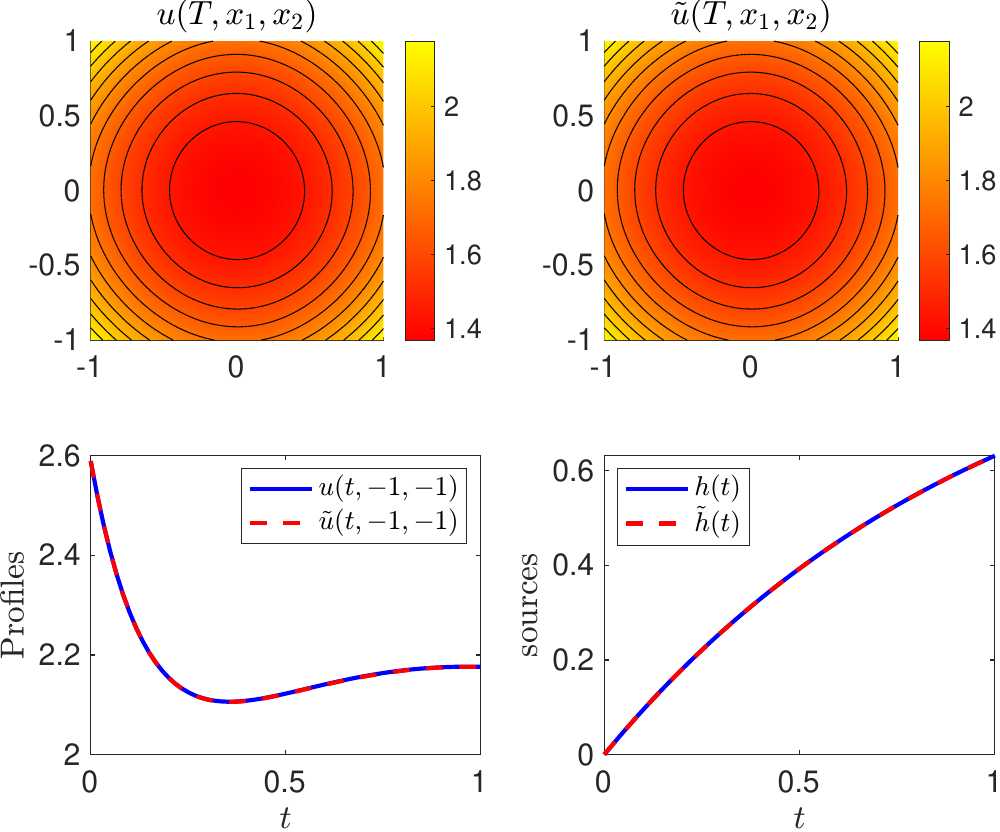}
\caption{Analytic solution $u$ and its approximation $\tilde{u}$ at the final time $T = 1$ (top); profiles of $u$ and its approximation $\tilde{u}$ at $(-1,-1)$ (bottom left); analytic source $h$ and its approximation $\tilde{h}$ (bottom right). $N = 60, \tau = 0.05$ with no noise added and no regularization.}
\label{fig:exact_No_reg_N60}
\end{figure}

The errors resulting from these approximations are given in Table~\ref{table:exact_no_reg_errors}.
\begin{table}[H]
\caption{The relative errors for the solutions in the case of exact data}
\label{table:exact_no_reg_errors}
\centering
\def\arraystretch{1.2}
\begin{tabular}{ccccc} 
\hline                                                                
$p$ & $\alpha$ & $\displaystyle \frac{|e(T,x_1,x_2)|_{1+\delta/2;2+\delta;Q_T}}{|u(T,x_1,x_2)|_{1+\delta/2, 2+\delta; Q_T}}$ & $\displaystyle\frac{|e(t,-1,-1)|_{1+\delta/2;2+\delta;Q_T}}{|u(t,-1,-1)|_{1+\delta/2, 2+\delta; Q_T}}$ & $\displaystyle\frac{|h - \tilde{h}|_{1+\delta/2;(0,T)}}{|h|_{1+\delta/2;(0,T)}}$ \\ [2ex]
\hline                                                          
$0$ & 0 & 0.0001 & 0.0013 & 0.0003 \\
[1ex]
\hline
\end{tabular}
\end{table}

The reconstruction in the case of exact data ($\epsilon = 0$) and no regularization ($\alpha = 0$) seems great; however, in practice, $\epsilon=0$ is impossible, since the data are always corrupted by noise. Even computer-simulated data are affected by roundoff errors. Therefore, this noise-free approximation is not to be trusted. 

Next, we explore the dynamic of the approximations with respect to noise. we first add $p = 1\%, 3\%, 5\%$ noise to the right-hand side of~\eqref{matrixeq} and perform no regularization ($\alpha = 0$). Then use~\eqref{TikhonovSol} to find the approximations. The resulting graphs are given in Figure~\ref{fig:exact_all_Noise_Noreg}.
\begin{figure}[htbp]
\centering
\includegraphics[width = 1\textwidth, height = .35\textheight]{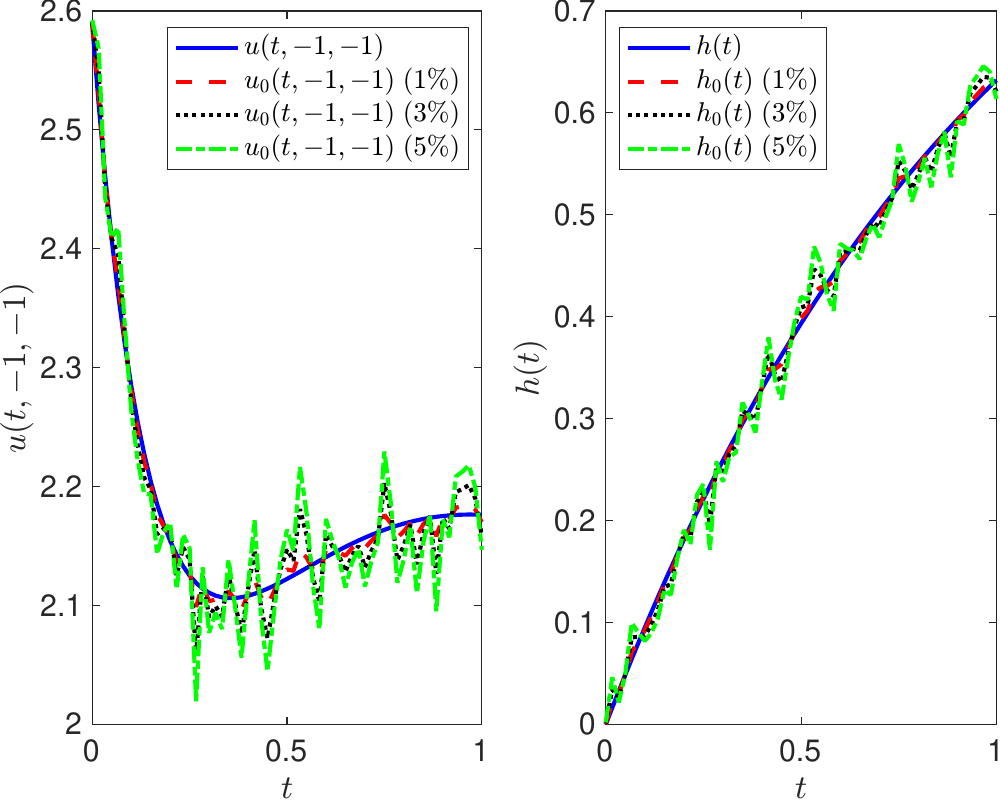}
\caption{Analytic and numerical results of $h(t)$ and $u(t,-1,-1)$ for $p = 1\%, 3\%, 5\%$ and $\alpha = 0$.}
\label{fig:exact_all_Noise_Noreg}
\end{figure}

The resulting errors are given in Table~\ref{table:exact_all_Noise_Noreg}. This shows that the approximations are not stable if no regularization is applied even with such small percentages of noise. 
\begin{table}[H]
\caption{The relative errors of the solutions with $\alpha = 0$ for graphs in Figure~\ref{fig:exact_all_Noise_Noreg}.}
\label{table:exact_all_Noise_Noreg}
\centering
\def\arraystretch{1.2}
\begin{tabular}{cccc} 
\hline                                                                
$p$ & $\alpha$ & $\displaystyle\frac{|e(t,-1,-1)|_{1+\delta/2;2+\delta;Q_T}}{|u(t,-1,-1)|_{1+\delta/2, 2+\delta; Q_T}}$ & $\displaystyle\frac{|h - h_0|_{1+\delta/2;(0,T)}}{|h|_{1+\delta/2;(0,T)}}$ \\ [2ex]
\hline                                                          
$1\%$ & 0 & 0.3556 & 1.2694 \\
[1ex]
$3\%$ & 0 & 1.0668 & 3.8081 \\
[1ex]
$5\%$ & 0 & 1.7781 & 6.3467 \\
\hline
\end{tabular}
\end{table}

Next, we apply Tikhonov regularization~\eqref{TikhonovSol} with $L=I$. This is illustrated in Figure~\ref{fig:exact_all_LeqI_Reg} where we have plotted both the analytical solutions $u(t,-1,-1)$ and $h(t)$ and the regularized solutions  $u_\alpha(t,-1,-1)$ and $h_\alpha(t)$ corresponding to the noisy solutions of Figure~\ref{fig:exact_all_Noise_Noreg}.  
\begin{figure}[H]
\centering
\includegraphics[width = 1\textwidth, height = .3\textheight]{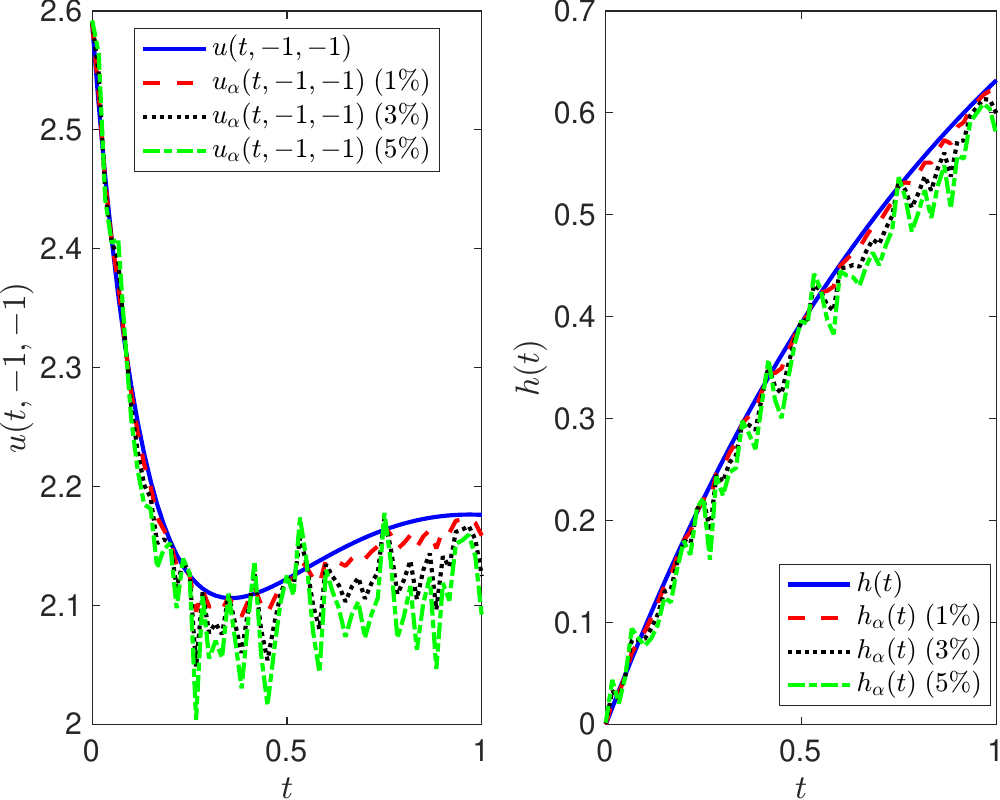}
\caption{Analytic solutions $u(t,-1,-1)$ and $h(t)$ and regularized solutions $u_\alpha(t,-1,-1)$ and $h_\alpha(t)$ corresponding to the noisy solutions of Figure~\ref{fig:exact_all_Noise_Noreg} in the case $L=I$.}
\label{fig:exact_all_LeqI_Reg}
\end{figure}

The relative errors resulting from these reconstructions and the values of the regularization parameter $\alpha$ obtained are given in Table~\ref{table:exact_all_LeqI_Reg_errors}.
\begin{table}[H]
\caption{The relative errors for the solutions and values of the regularization parameter $\alpha$ for graphs in Figure~\ref{fig:exact_all_LeqI_Reg}.}
\label{table:exact_all_LeqI_Reg_errors}
\centering
\def\arraystretch{1.2}
\begin{tabular}{cccc} 
\hline                                                                
$p$ & $\alpha$ & $\displaystyle\frac{|e(t,-1,-1)|_{1+\delta/2;2+\delta;Q_T}}{|u(t,-1,-1)|_{1+\delta/2, 2+\delta; Q_T}}$ & $\displaystyle\frac{|h - h_\alpha|_{1+\delta/2;(0,T)}}{|h|_{1+\delta/2;(0,T)}}$ \\ [2ex]
\hline                                                          
$1\%$ & 0.2551 & 0.3530 & 1.2602 \\
[1ex]
$3\%$ & 0.7846 & 1.0348 & 3.6938 \\
[1ex]
$5\%$ & 1.3410 & 1.6842 & 6.0117 \\
\hline
\end{tabular}
\end{table}

The discrepancy principle curves for obtaining $\alpha$ are provided in Figure~\ref{fig:exact_all_LeqI_MDP}.

\begin{figure}[H]
\centering
\includegraphics[width =1\textwidth, height=0.25\textheight]{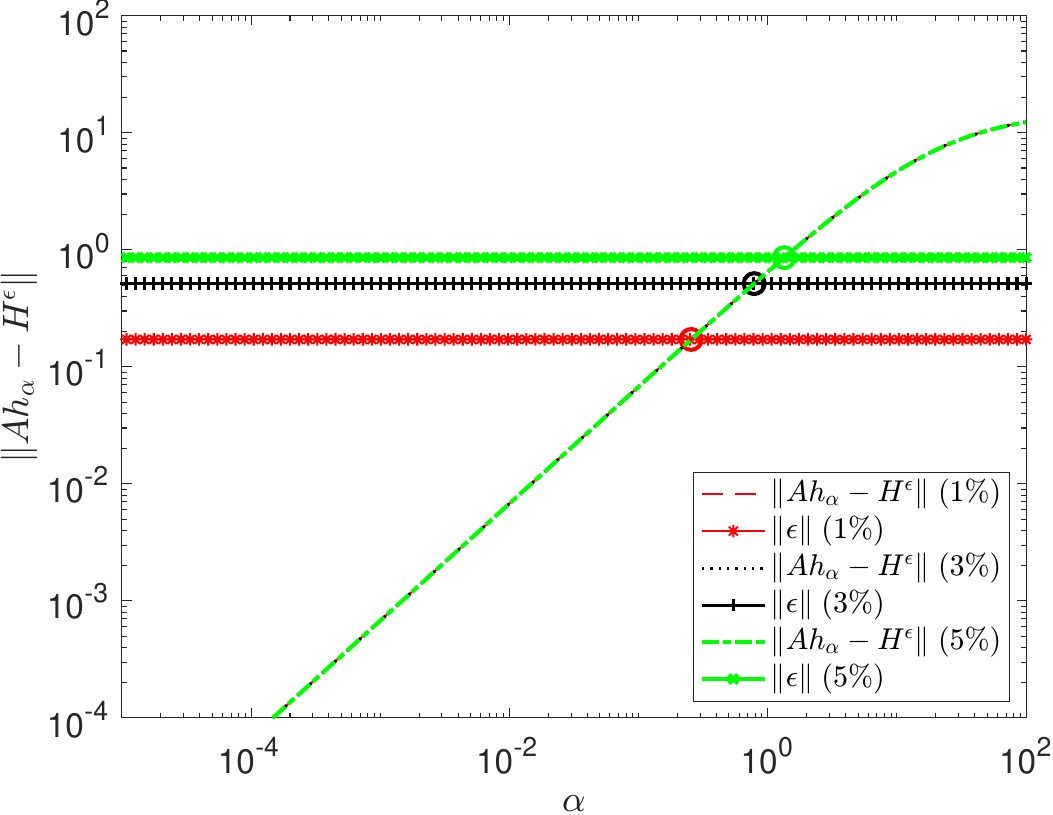}
\caption{The discrepancy principle curves for graphs in Figure~\ref{fig:exact_all_LeqI_Reg} and $\alpha$ in Table~\ref{table:exact_all_LeqI_Reg_errors}.}
\label{fig:exact_all_LeqI_MDP}
\end{figure}

Comparing Figure~\ref{fig:exact_all_Noise_Noreg} with Figure~\ref{fig:exact_all_LeqI_Reg}, it is evident that identity Tikhonov regularization does not effectively denoise the noisy solutions. Moreover, a comparison of Table~\ref{table:exact_all_Noise_Noreg} and Table~\ref{table:exact_all_LeqI_Reg_errors} shows that the relative errors of the regularized solutions remain very close to those of the noisy, unregularized solutions. These observations confirm that, when the coefficient matrix is well conditioned, Tikhonov regularization with $L = I$ provides no significant improvement, in agreement with the theoretical result established in Proposition~\ref{prop:Tikhonov_with_L=I}.

To illustrate our simulations in the case $L\ne I$, the choice of $L$ is the second-derivative regularization operator $L \in \mathbb{R}^{(N-2)\times N}$ defined by the finite-difference approximation
\[
(Lh)_n = \frac{h_{n+2} - 2h_{n+1} + h_n}{\Delta t^2},
\qquad n = 1,\dots,N-2,
\]
that is,
\begin{equation}\label{second_derivative_penalty}
L = \frac{1}{\Delta t^2}
\begin{pmatrix}
1 & -2 & 1 & 0 & \cdots & 0 \\
0 & 1 & -2 & 1 & \ddots & \vdots \\
\vdots & \ddots & \ddots & \ddots & \ddots & 0 \\
0 & \cdots & 0 & 1 & -2 & 1
\end{pmatrix}.
\end{equation}
This matrix better captures the smoothness of the unknown source term $h(t)$, penalizes the discrete curvature of the reconstructed source, and enforces additional smoothness compared to first-derivative regularization. 

This second-derivative penalty is particularly well suited for smooth source functions.
In contrast to first-derivative regularization, it more effectively suppresses spurious oscillations in the derivative of the reconstructed solution. As demonstrated in the numerical experiments, this choice leads to improved accuracy in derivative-sensitive errors in $C^{1+\delta/2, 2+\delta}$ and $C^{1+\delta/2}$-type norms while maintaining stability with respect to noise.

Figure~\ref{fig:exact_all_LnoteqI_Reg} shows the graphs of the exact and regularized solutions corresponding to the noisy solutions of Figure~\ref{fig:exact_all_Noise_Noreg} for $p=1\%, 3\%, 5\%$ obtained with the operator $L$ given by the regularization matrix~\eqref{second_derivative_penalty}. 
\begin{figure}[H]
\centering
\includegraphics[width = 1\textwidth, height = .33\textheight]{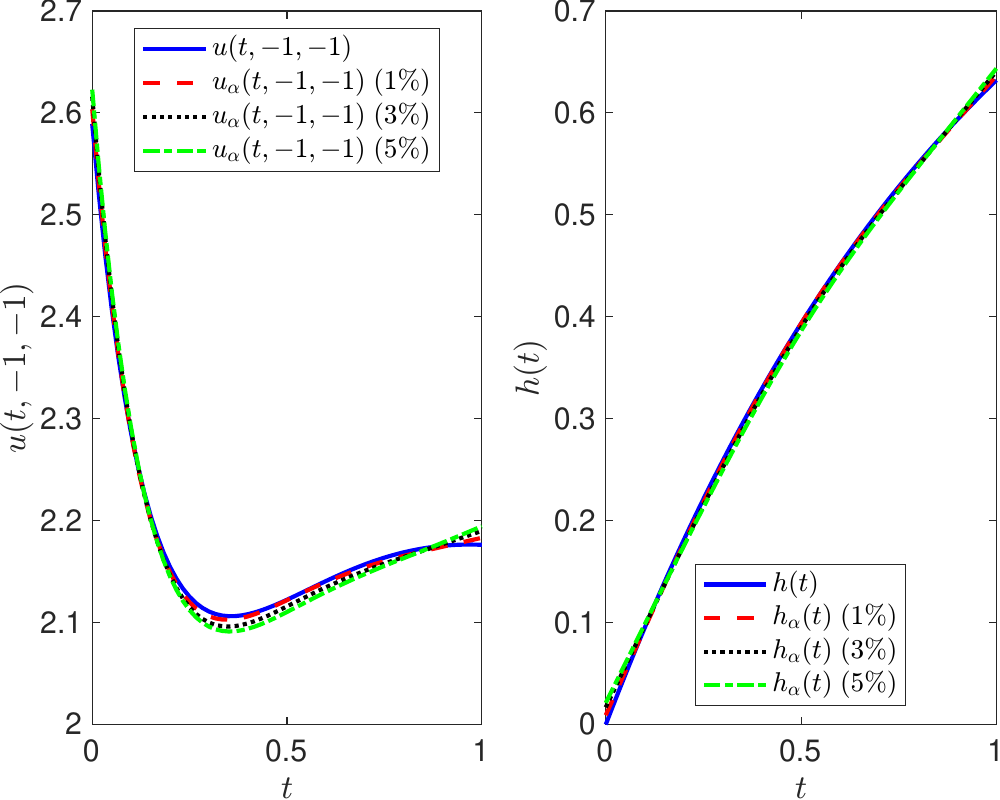}
\caption{The exact and regularized solutions for $p=1\%, 3\%, 5\%$ obtained with the operator $L$ in~\eqref{second_derivative_penalty}.}
\label{fig:exact_all_LnoteqI_Reg}
\end{figure}

The relative errors resulting from the approximations are provided in Table~\ref{table:exact_all_LnoteqI_Reg_errors} together with the values of the regularization parameter $\alpha$ obtained. These errors show an improvement compared to those of noisy solutions in Table~\ref{table:exact_all_Noise_Noreg}.
\begin{table}[H]
\caption{The relative errors for the solutions and values of the regularization parameter $\alpha$ for graphs in Figure~\ref{fig:exact_all_LnoteqI_Reg}.}
\label{table:exact_all_LnoteqI_Reg_errors}
\centering
\def\arraystretch{1.2}
\begin{tabular}{cccc} 
\hline                                                                
$p$ & $\alpha$ & $\displaystyle\frac{|e(t,-1,-1)|_{1+\delta/2;2+\delta;Q_T}}{|u(t,-1,-1)|_{1+\delta/2, 2+\delta; Q_T}}$ & $\displaystyle\frac{|h - h_\alpha|_{1+\delta/2;(0,T)}}{|h|_{1+\delta/2;(0,T)}}$ \\ [2ex]
\hline                                                          
$1\%$ & 0.0025 & 0.0453 & 0.1645 \\
[1ex]
$3\%$ & 0.0096 & 0.0606 & 0.2190 \\
[1ex]
$5\%$ & 0.0163 & 0.0674 & 0.2435 \\
\hline
\end{tabular}
\end{table}

Unlike the not-so-good Tikhonov regularization results of Figure~\ref{fig:exact_all_LeqI_Reg} and Table~\ref{table:exact_all_LeqI_Reg_errors} in the standard case $L=I$, regularization in the general case $L\ne I$ and in particular in this case where the prior/penalty $L$ is given by the second derivative penalty~\eqref{second_derivative_penalty}, performs well, in agreement with the theoretical result established in Proposition~\ref{prop:Tikhonov_LnoteqI}.

As expected, as the noise level increases from $1\%$ to $5\%$, the regularization parameter increases monotonically with the noise level and the relative error in the recovered source increases, while the time-trace error at the observation point increases modestly, indicating stable reconstruction under increasing perturbations and stronger smoothing in the reconstruction.

Although the relative errors in $h$ range from 0.1645 to 0.2435, these are computed in a strong $C^{1+\delta/2}(0,T)$-type norm that includes both $|h'|_{0;(0,T)}$ and the H\"{o}lder seminorm $[h']_{\delta/2; (0,T)}$; these derivative-based terms are well known to amplify small oscillations introduced by noisy data and discrete differentiation~\cite{EnglHankeNeubauer1996,Lunardi1995,Natterer2001}, even when the reconstructed $h$ matches the exact profile closely as in Figure~\ref{fig:exact_all_LnoteqI_Reg}.

The discrepancy principle curves for obtaining $\alpha$ in Table~\ref{table:exact_all_LnoteqI_Reg_errors} are provided in Figure~\ref{fig:exact_all_LnoteqI_MDP}.
\begin{figure}[H]
\centering
\includegraphics[width = 1\textwidth, height = .33\textheight]{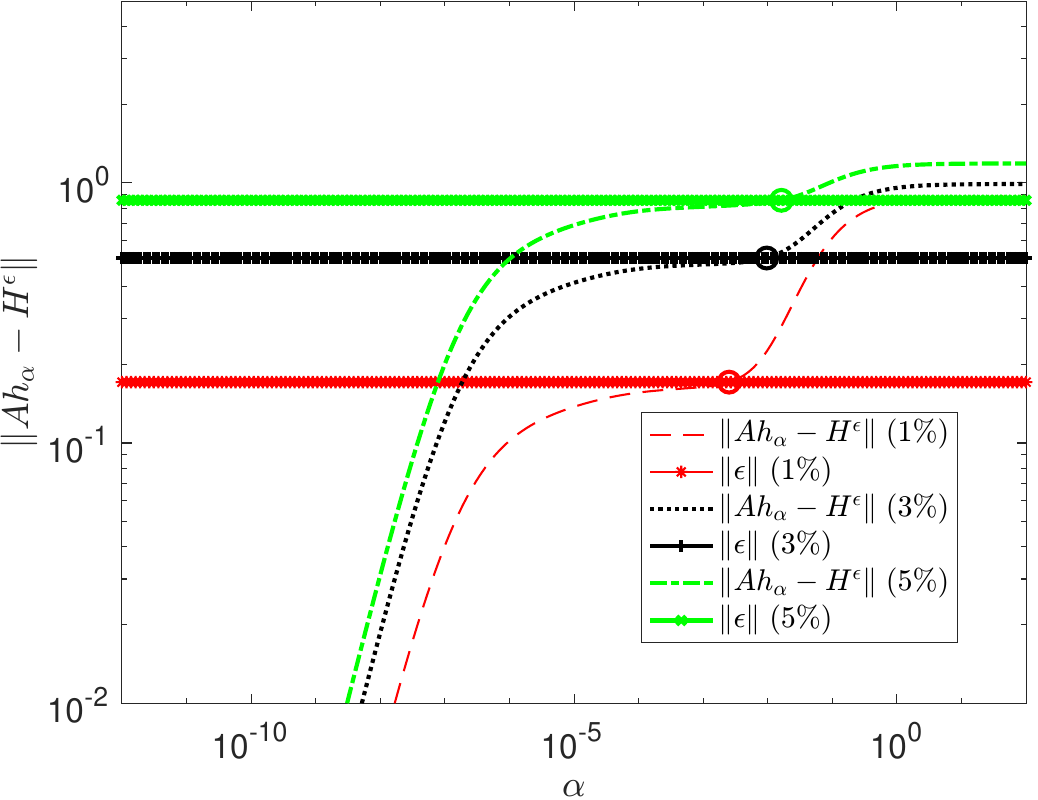}
\caption{The discrepancy principle curves for $p=1\%, 3\%, 5\%$ noise and $\alpha$ in Table~\ref{table:exact_all_LnoteqI_Reg_errors}.}
\label{fig:exact_all_LnoteqI_MDP}
\end{figure}

\subsection{Computational algorithm}\label{comp_algo}
We provide the computational algorithm employed to find approximate solutions using the steps below. The first step is split into two depending on whether we have analytic solutions/real-world data or not. However, Step 2--Step 5 are common to both cases. 
\begin{description}
\item [Step 1a:] {\it If analytic solution or real-world data for which the integral data $\mu(t)$ is available}: approximate the solution $v$ of the forward problem~\eqref{parab1}--\eqref{bcparab1} and calculate its integral over the spatial domain $\Omega$. Then proceed to Step 2--Step 5.
\item [Step 1b:] {\it In the absence of analytic solution or real-world integral data $\mu(t)$}, simulate the data: provide a source function $h(t)$ that is intended to be recovered. Call it the {\it exact source}. With this source given,  solve the problem~\eqref{inv1}--\eqref{bcinv1} for $u$. Call it the {\it exact $u$}. Then compute the integral data $\mu(t)$ given by~\eqref{integparab1}. Next, approximate the solution $v$ of the forward problem~\eqref{parab1}--\eqref{bcparab1} and calculate its integral over the spatial domain $\Omega$. Make sure to use a different mesh size than the one employed to solve the problem~\eqref{inv1}--\eqref{bcinv1} for $u$, in order to avoid committing an inverse crime. Then go to Step 2--Step 5.
\item [Step 2:] Construct the matrix $A$ in~\eqref{matrixeq} and add noise $\epsilon$ to the right-hand side of~\eqref{matrixeq}. Moreover, construct a differential regularization matrix $L$, like the one in~\eqref{second_derivative_penalty}.
\item [Step 3:] Use Tikhonov regularization formula~\eqref{TikhonovSol} to compute the approximate source $h_\alpha$. You will need to use a method to estimate the regularization parameter $\alpha$ such as the discrepancy principle or others.
\item [Step 4:] Find $u_\alpha$, the approximate $u$ from Equation~\eqref{sol_u}, keeping in mind to use $h_\alpha$, not $h$.
\item [Step 5:] Compute the errors in the solutions.
\end{description}

\subsection{A problem with no analytic solutions}\label{piece_wise_approximation}
In section~\ref{analytic_sol} we applied Step (1a) of the algorithm to differentiable functions $h(t)$ and $u(t,x_1,x_2)$ with analytic solutions. In real-world applications, analytic solutions are almost never available. In this section we construct a more severe discontinuous source function as follows
\begin{align}\label{discontinuousSource}
h(t) = 
\begin{cases}
0 &\quad\text{if $0\le t < \frac{T}{5}$} \\
1 &\quad\text{if $\frac{T}{5}\le t < \frac{T}{2}$} \\
0 &\quad\text{if $\frac{T}{2}\le t < \frac{3}{4}T$} \\
1 &\quad\text{if $\frac{3}{4}T\le t \le T$}, \\
\end{cases}
\end{align}
and apply Step (1b) of the algorithm to demonstrate the accuracy of our algorithm. For simplicity, we assume that for all $i, j = 1, \cdots, d$,
\begin{align*}
a^{ij}(t,x_1,x_2) &= a(t,x_1,x_2) = (x_1^2 + x_2^2)t,\quad b^i(t,x_1,x_2) \equiv 0,\quad c(t,x_1,x_2)\equiv 1,\\ 
w(t,x_1,x_2) &\equiv 1,\quad\phi(t,x_1,x_2)\equiv 0,\\
g(t,x_1,x_2) &= A\left[1+\sin\left(2\pi(x_1 - ct)\right)\cos(\pi x_2)\right]\;\text{a traveling-wave flux with $A=1$ and $c=0.5$},\\
u_0(x_1,x_2) &= \frac{1}{4}\left(1+\cos(\pi x_1)\right)\left(1+\cos(\pi x_2)\right).
\end{align*}
So we solve system~\eqref{inv1}--\eqref{bcinv1} with the above data and source term $h'(t) + h(t)$. Since $h(t)$ as defined in~\eqref{discontinuousSource} is piecewise constant with jump discontinuities, its derivative is not an ordinary function, it is a distribution (a sum of Dirac deltas) located at the jump times:
\begin{equation}\label{eq:hprime}
h'(t) = \delta\left(t-\frac{T}{5}\right)
- \delta\left(t-\frac{T}{2}\right)
+ \delta\left(t-\frac{3T}{4}\right),
\end{equation}
where $\delta(\cdot)$ denotes the Dirac delta distribution. In our numerical simulations, the forcing term $h(t)+h'(t)$ is implemented as in~\eqref{discontinuousSource} and~\eqref{eq:hprime} where each delta function is approximated by a narrow Gaussian:
\begin{equation}\label{eq:delta_approx}
\delta(t-t_0)\;\approx\;\frac{1}{\sqrt{2\pi}\,\sigma_{\mathrm{pde}}}\exp\left(-\frac{(t-t_0)^2}{2\sigma_{\mathrm{pde}}^2}\right).
\end{equation}

In the numerical implementation, Dirac delta distributions were approximated by Gaussian mollifiers with standard deviation proportional to the time step. Specifically, we used
$\sigma_{\mathrm{pde}} = 0.5\;\Delta t$ to solve the PDE in order to localize the source within a single time step~\cite{Evans10}.

One thing to avoid when dealing with an inverse problem is the concept of \textit{inverse crime}, which occurs when the same method on the same mesh and time steps is being used to both construct the data and compute the solution~\cite[Chapters 2-3]{Mueller-Siltanen12}. To avoid committing an inverse crime, in the reconstruction process, we take $\tau = 0.06$ (different from $\tau=0.05$ in simulating the data) and keep $N=60$, then interpolate the simulated data into the mesh of the reconstruction. 

The choice of $L$ in this example is the first-derivative regularization operator $L \in \mathbb{R}^{(N-1)\times N}$ defined by the finite-difference approximation on a uniform grid
\[
(Lh)_n = \frac{h_{n+1}-h_n}{\Delta t},
\qquad n = 1,\dots,N-1.
\]
That is,
\begin{equation}\label{diffRegmatrix1}
L = \frac{1}{\Delta t}
\begin{pmatrix}
-1 &  1 &  0 &  0 & \cdots & 0 \\
 0 & -1 &  1 &  0 & \cdots & 0 \\
 0 &  0 & -1 &  1 & \cdots & 0 \\
 \vdots &   &   & \ddots & \ddots & \vdots \\
 0 & \cdots & 0 & 0 & -1 & 1
\end{pmatrix}.
\end{equation}

For the piecewise-constant source, first-derivative regularization is preferable since it promotes near-constant plateaus and suppresses spurious oscillations without enforcing unnecessary curvature smoothness across jump regions, whereas second-derivative regularization tends to round discontinuities and bias the reconstruction near transitions.

In Figure~\ref{fig:piece_NoNoise_NoReg} we have plotted a time trace profile of the exact solution $u$ and its approximation and the exact and approximate source functions with no added noise and no regularization. With no surprise, an eye inspection shows an excellent recovery. 
\begin{figure}[htbp]
\centering
\includegraphics[width = 1\textwidth, height = .35\textheight]{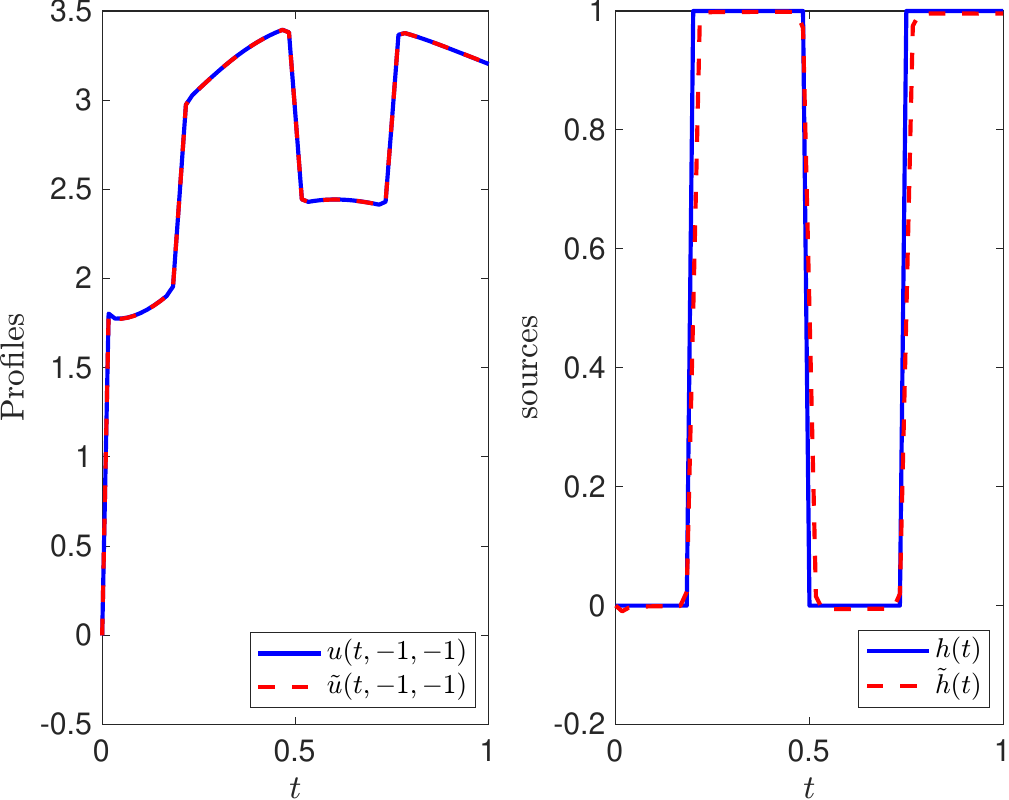}
\caption{Exact and numerical results of $h(t)$ and $u(t,-1,-1)$ obtained with no added noise and no regularization with $N=60$ and $h = 0.05$ for simulating the data and $h = 0.06$ for reconstruction.}
\label{fig:piece_NoNoise_NoReg}
\end{figure}

To compute the relative error in $C^{1+\delta/2,2+\delta}(Q_T)$ and $C^{1+\delta/2}(0,T)$ as defined in~\eqref{final_time_norm}--\eqref{source_error}, we smooth the exact piecewise constant source $h(t)$, its derivative $h'(t)$, and its numerical reconstruction $\tilde{h}(t)$. To this end, observe that the source in~\eqref{discontinuousSource} can be written as
\[
h(t)= H\left(t-\frac{T}{5}\right) - H\left(t-\frac{T}{2}\right) + H\left(t-\frac{3T}{4}\right) - H(t-T),
\]
where $H$ denotes the Heaviside step function. Now, for a fixed scale $\sigma_{\mathrm{err}}>0$, let
\[
\rho_{\sigma_{\mathrm{err}}}(t) = \frac{1}{\sqrt{2\pi}\,\sigma_{\mathrm{err}}} \exp\left(-\frac{t^2}{2\sigma_{\mathrm{err}}^2}\right),
\]
and define the Gaussian--mollified source
\[
h_{\sigma}(t) := (h * \rho_{\sigma})(t) =  \int_0^T h(s)\,\rho_{\sigma_{\mathrm{err}}}(t-s)\,ds.
\]
Using the identity
\[
(H * \rho_{\sigma_{\mathrm{err}}})(t)
=
\frac{1}{2}
\left(
1 + \operatorname{erf}\!\left(\frac{t}{\sqrt{2}\,\sigma_{\mathrm{err}}}\right)
\right),
\]
we obtain the explicit closed form
\begin{align*}
h_{\sigma_{\mathrm{err}}}(t)
&=
\frac{1}{2}
\left(
1 + \operatorname{erf}\!\left(\frac{t-\frac{T}{5}}{\sqrt{2}\,\sigma_{\mathrm{err}}}\right)
\right) -
\frac{1}{2}\left(1 + \operatorname{erf}\!\left(\frac{t-\frac{T}{2}}{\sqrt{2}\,\sigma_{\mathrm{err}}}\right)
\right) +
\frac{1}{2}\left(1 + \operatorname{erf}\!\left(\frac{t-\frac{3T}{4}}{\sqrt{2}\,\sigma_{\mathrm{err}}}\right)
\right) \\
&\quad - \frac{1}{2}\left(1 + \operatorname{erf}\!\left(\frac{t-T}{\sqrt{2}\,\sigma_{\mathrm{err}}}\right)
\right).
\end{align*}
Differentiating explicitly, we obtain
\begin{equation*}
h_{\sigma_{\mathrm{err}}}'(t)
=
\frac{1}{\sqrt{2\pi}\,\sigma_{\mathrm{err}}}
\Bigg[
\exp\!\left(-\frac{(t-\frac{T}{5})^2}{2\sigma_{\mathrm{err}}^2}\right)
-
\exp\!\left(-\frac{(t-\frac{T}{2})^2}{2\sigma_{\mathrm{err}}^2}\right)
+
\exp\!\left(-\frac{(t-\frac{3T}{4})^2}{2\sigma_{\mathrm{err}}^2}\right)
-
\exp\!\left(-\frac{(t-T)^2}{2\sigma_{\mathrm{err}}^2}\right)
\Bigg].
\end{equation*}
For the mollified reconstruction we have
\[
\tilde{h}_{\sigma_{\mathrm{err}}}(t)
:=
(\tilde{h} * \rho_{\sigma_{\mathrm{err}}})(t)
=
\int_0^T \tilde{h}(s)\,\rho_{\sigma_{\mathrm{err}}}(t-s)\,ds.
\]
Since $\tilde{h}$ is available only at discrete times $t_n=n\Delta t$,
the mollification is implemented as a discrete convolution (quadrature)
\[
\tilde{h}_{\sigma_{\mathrm{err}}}(t_n)
\approx
\sum_{m=0}^{N}
\tilde{h}(t_m)\,\rho_{\sigma_{\mathrm{err}}}(t_n-t_m)\,\Delta t,
\qquad n=0,1,\dots,N,
\]
followed by normalization of the discrete kernel so that the weights sum to $1$. We computed the numerical derivative of $\tilde{h}_{\sigma_{\mathrm{err}}}(t)$ with a centered finite difference formula.
Now, $h_{\sigma_{\mathrm{err}}}$, $h'_{\sigma_{\mathrm{err}}}$, and $\tilde{h}_{\sigma_{\mathrm{err}}}$ belong to $C^\infty(0,T)$ and therefore to $C^{1+\delta/2}(0,T)$ for any $\delta\in(0,1)$. 

In our numerical simulations, a slightly larger width $\sigma_{\mathrm{err}} = 2\Delta t$ was employed in the computation of relative errors to obtain a smoother and more robust comparison between the exact and reconstructed sources. See, for example, \cite{Peskin2002} for approximating delta functions numerically and choosing regularization widths proportional to grid size.

It is worth emphasizing that $\sigma_{\mathrm{err}}$ is only used in the calculation of errors when the approximate solutions have already been obtained, while $\sigma_{\mathrm{pde}}$ in~\eqref{eq:delta_approx} is used in the modeling part, to find approximate solutions to the inverse problem.

In Table~\ref{table:piece_NoNoise_NoReg_errors}, we show the relative errors between the exact and approximate solutions.
\begin{table}[H]
\caption{The relative errors between the exact and the approximate solutions with no noise added and no regularization for graphs in Figure~\ref{fig:piece_NoNoise_NoReg}.}
\label{table:piece_NoNoise_NoReg_errors}
\centering
\def\arraystretch{1.2}
\begin{tabular}{cccc}
\hline 
$p$ & $\alpha$ & $\displaystyle \frac{|e(t,-1,-1)|_{1+\delta/2;2+\delta;Q_T}}{|u(t,-1,-1)|_{1+\delta/2, 2+\delta; Q_T}}$ & $\displaystyle \frac{|h - \tilde{h}|_{1+\delta/2;(0,T)}}{|h|_{1+\delta/2;(0,T)}}$ \\ [2ex]
\hline                                                          
$0$ & 0 & 0.0033 & 0.1187 \\
\hline
\end{tabular}
\end{table}

Next, we add $p = 1\%, 3\%, 5\%$ noise to the data and perform no regularization. The results are displayed in Figure~\ref{fig:piece_all_Noise_NoReg}.  
\begin{figure}[htbp]
\centering
\includegraphics[width = 1\textwidth, height = .3\textheight]{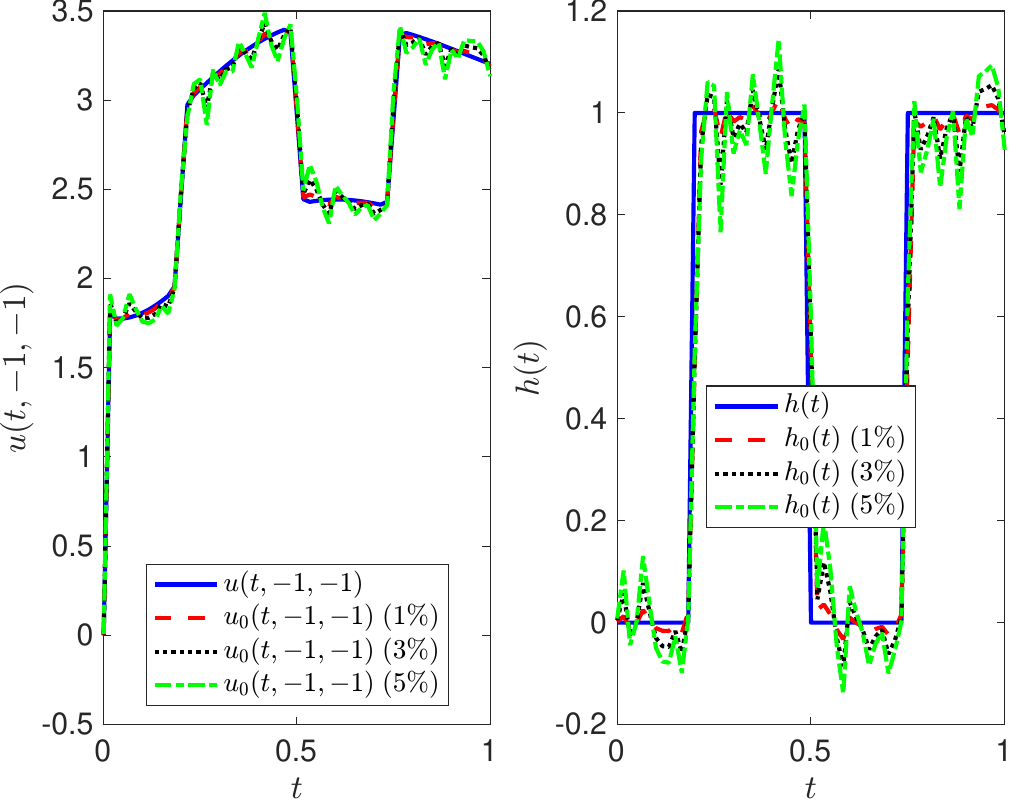}
\caption{Exact and numerical results of $h(t)$ and $u(t,-1,-1)$ obtained with $1\%, 3\%, 5\%$ added noise and no regularization with $N=60$ and $h = 0.05$ for simulating the data and $h = 0.06$ for reconstruction.}
\label{fig:piece_all_Noise_NoReg}
\end{figure}

As one can observe, the results of adding $p = 1\%, 3\%, 5\%$ noise to the data are not catastrophic. This is expected because the operator $A$ in~\eqref{matrixeq} is well-conditioned (a scale of the identity matrix). So, there is no ill-posedness in the problem. Again, the application of regularization in this context is for the purpose of denoising the signal, not to combat instability.

Now, we apply Tikhonov regularization with Morozov's discrepancy principle in estimating the values of the regularization parameter $\alpha$ to denoise the noisy solutions of Figure~\ref{fig:piece_all_Noise_NoReg}. The results are provided in Figure~\ref{fig:piece_all_Noise_Reg}.
\begin{figure}[htbp]
\centering
\includegraphics[width = 1\textwidth, height = .3\textheight]{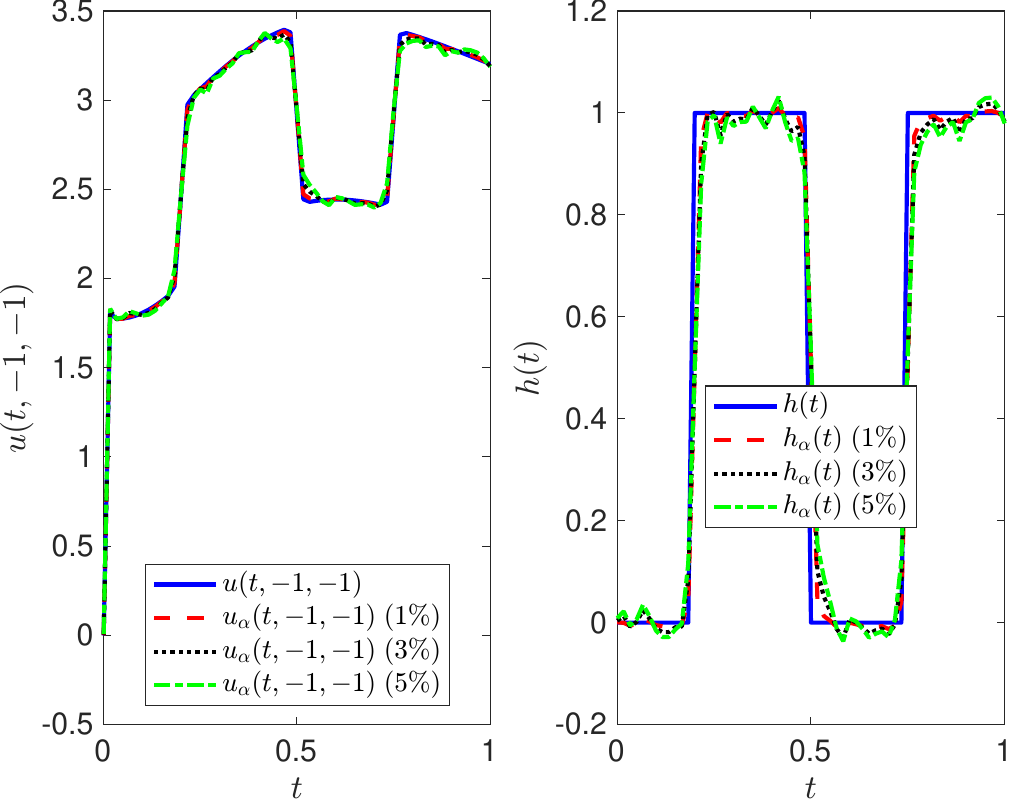}
\caption{The exact and regularized results of $u(t,-1,-1)$ and $h(t)$ for $p\in\{1\%, 3\%, 5\%\}$ corresponding to noisy solutions of Figure~\ref{fig:piece_all_Noise_NoReg}.}
\label{fig:piece_all_Noise_Reg}
\end{figure}

The values of $\alpha$ and the relative errors resulting from the approximations are given in Table~\ref{table:piece_all_Noise_Reg_errors}. The results are consistent with Proposition~\ref{prop:Tikhonov_LnoteqI}

\begin{table}[H]
\caption{The regularization parameters and the relative errors between the exact and the regularized solutions for graphs in Figure~\ref{fig:piece_all_Noise_Reg}.}
\label{table:piece_all_Noise_Reg_errors}
\centering
\def\arraystretch{1.2}
\begin{tabular}{cccc}
\hline 
$p$ & $\alpha$ & $\displaystyle \frac{|e(t,-1,-1)|_{1+\delta/2;2+\delta;Q_T}}{|u(t,-1,-1)|_{1+\delta/2, 2+\delta; Q_T}}$ & $\displaystyle \frac{|h - \tilde{h}|_{1+\delta/2;(0,T)}}{|h|_{1+\delta/2;(0,T)}}$ \\ [2ex]
\hline                                                          
$1\%$ & 2.5009e-04 & 0.0197 & 0.1120 \\ [2ex]
\hline
$3\%$ & 8.7374e-04 & 0.0534 & 0.1499 \\ [1ex]
\hline
$5\%$ & 0.0017 & 0.0809 & 0.1968 \\ [1ex]
\hline
\end{tabular}
\end{table}

The discrepancy principle curves are provided in Figure~\ref{fig:piece_all_MDP}.
\begin{figure}[htbp]
\centering
\includegraphics[width = 1\textwidth, height = .35\textheight]{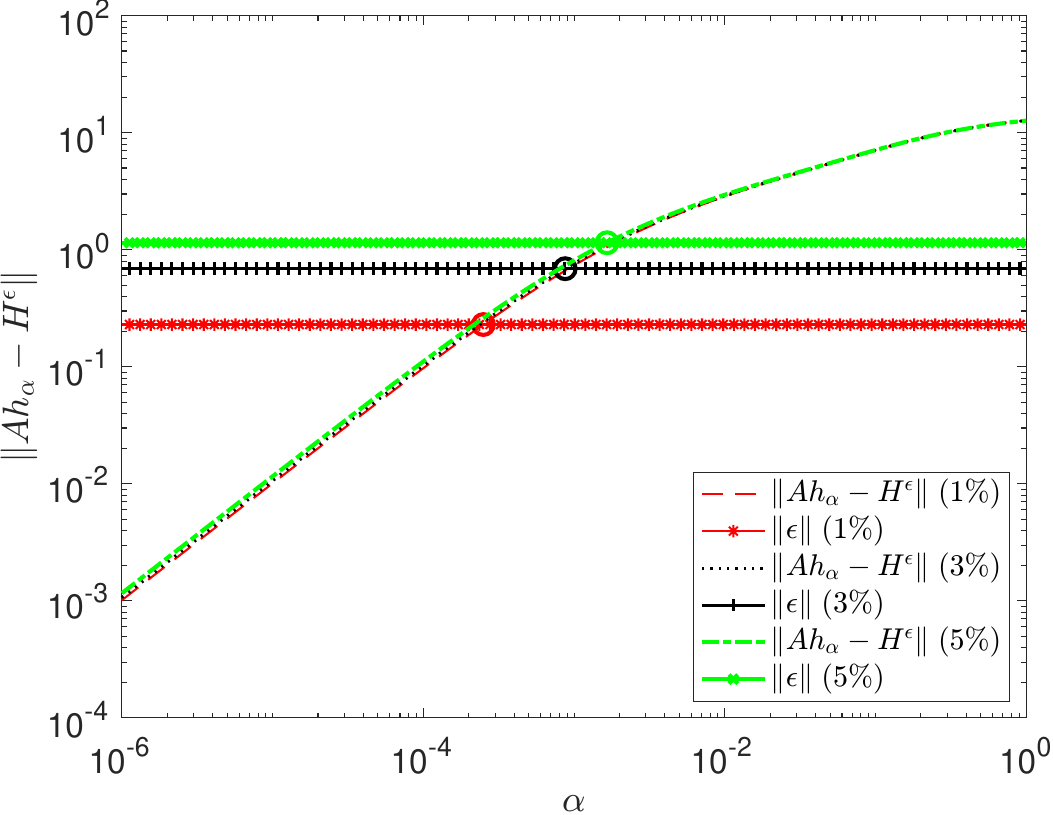}
\caption{The discrepancy principle curves generated with Tikhonov regularization for $p\in\{1\%, 3\%, 5\%\}$ noise levels and regularization parameters $\alpha$ given in Table~\ref{table:piece_all_Noise_Reg_errors}}
\label{fig:piece_all_MDP}
\end{figure}

As the noise level increases from $1\%$ to $5\%$, the discrepancy principle selects progressively larger regularization parameters, resulting in stronger smoothing of the reconstructed source. Although this leads to a moderate increase in the relative source error, the time-trace error at the observation point remains well controlled, demonstrating the stability of the reconstruction with respect to increasing perturbations.

\section{Conclusion and Future Work}\label{conclusion}

In this work, we established the well-posedness and higher H\"older regularity of a
time-dependent inverse source problem for a parabolic equation subject to an integral
constraint and Neumann boundary conditions. The analysis was carried out for a general
parabolic operator with space--time dependent coefficients. In addition, a numerical
algorithm based on a finite element discretization in space and an implicit
time-stepping scheme was developed and implemented.

The resulting discrete inverse problem involves a well-conditioned operator. 
In this setting, we showed theoretically, through the GSVD, that
derivative-based Tikhonov regularization can provide an effective
denoising strategy even though regularization is not required to stabilize
the underlying inverse operator. The regularization parameter was selected using the Morozov discrepancy principle in order to ensure stable reconstructions. Two numerical examples were presented. For the smooth source example, a second-derivative penalty was employed and shown to significantly
improve derivative-sensitive error measures compared to first-derivative
regularization. For the piecewise constant source with jump discontinuities, a
first-derivative penalty was used to suppress oscillations while avoiding unnecessary
curvature smoothing across jump regions. The numerical results demonstrate that the
proposed approach yields accurate and stable reconstructions when errors are measured
in full parabolic H\"{o}lder norms.

Several directions for future work naturally arise from this study. From a numerical
analysis perspective, it would be of interest to derive a rigorous error analysis for
the proposed discretization scheme, including convergence rates with respect to the
mesh size and time step. Another promising direction is the investigation of
alternative regularization strategies, such as total variation or sparsity-promoting
penalties, particularly for source terms with sharp transitions. Finally, it would be
worth exploring whether the integral constraint can be replaced or supplemented by
other types of observational data, such as final-time measurements or partial interior
observations, and to study the corresponding impact on well-posedness and numerical
reconstruction.

\bibliographystyle{plain}
\bibliography{Sample}

@book{Friedman08,
    author  = "Avner Friedman",
    title   = "Partial Differential Equations of Parabolic Type",
    year    = "2008",
    publisher = "Courier Dover Publications",
    volume  = {},
    number  = {},
    pages   = ""
}

@book{Gilbarg-Trudinger96,
    author  = "David Gilbarg and Neil S. Trudinger",
    title   = "Elliptic Partial Differential Equations of second order",
    year    = "1998",
    publisher = "Springer",
    edition = "Revised Third Printing"
}

@book{Krylov96,
    author  = "N. V. Krylov",
    title   = "Lectures on elliptic and parabolic equations in Holder spaces",
    year    = "1996",
    publisher = "American Mathematical Society",
    volume  = {12},
}

@book{Ladyzhenskaia88,
    author  = "Olga Aleksandrovna Ladyzhenskaia and Vsevolod Alekseevich Solonnikov and Nina N. Ural'tseva",
    title   = "Linear and quasi-linear equations of parabolic type",
    year    = "1988",
    publisher = "American Mathematical Society",
    volume  = {23}
}

@article{GlotovHamesMeirNgoma1, 
title={An integral constrained parabolic problem with applications in thermochronology},
author={Glotov, Dmitry and Hames, Willis E and Meir, A. J. and Ngoma, Sedar}, journal={Computers and Mathematics with Applications},
volume={71}, issue={11}, pages={2301-2312}, year={2016}, 
}

@article{GlotovHamesMeirNgoma2, 
title={An inverse diffusion coefficient problem for a parabolic equation with integral constraint},
author={Glotov, Dmitry and Hames, Willis E and Meir, A. J. and Ngoma, Sedar}, journal={International Journal of Numerical Analysis and Modeling},
volume={15}, number={4-5}, pages={552-563}, year={2018}, 
}

@article{HazaneeEtAl13, 
title={An inverse time-dependent source problem for the heat equation},
author={Hazanee, A. and Ismailov, M. I. and Lesnic, D. and Kerimov, N. B.}, journal={Applied Numerical Mathematics},
volume={69}, number={}, pages={13-33}, year={2013}, 
}

@article{Ginder2010, 
title={Construction of solutions to heat-type problems with time-dependent volume constraints},
author={Ginder, Elliott}, journal={Adv. Math. Sci. Appl.},
volume={20}, number={2}, pages={467-482}, year={2010}, 
}

@article{YangEtAl2011, 
title={Inverse problem of time-dependent heat sources numerical reconstruction},
author={Yang, Liu and Dehghan, Mehdi and Yu, Jian-Ning and Luo, Guan-Wei}, journal={Mathematics and Computers in Simulation},
volume={81}, number={}, pages={1656-1672}, year={2011}, 
}

@article{Rundell1980, 
title={Determination of an unknown non-homogeneous term in a linear partial differential equation from overspecified boundary condition},
author={Rundell, William}, journal={Applicable Analysis},
volume={10}, number={}, pages={231-242}, year={1980}, 
}

@article{Kedzierawski1993, 
title={The inverse scattering problem for time-harmonic acoustic waves in an inhomogeneous medium with complex refraction index},
author={Kedzierawski, Andrzej}, journal={Journal of Computational and Applied Mathematics},
volume={47}, number={1}, pages={83-100}, year={1993}, 
}

@article{DamirchiEtAl19, 
title={Numerical investigation of an inverse problem based on regularization method},
author={Damirchi, J. and Yazdanian, A.R. and Shamami, T.R. and Hasanpour, M.}, 
journal={Mathematical Sciences},
volume={13}, number={}, pages={193-199}, year={2019}, 
}

@book{Li-Chen,
  author={Li, Jichun and Chen, Yi-Tung},
  title={Computational Partial Differential Equations using MATLAB},
  year={2009},
  volume={},
  series={Chapman \& Hall/CRC Applied Mathematics and Nonlinear Science Series},
  publisher={CRC PRESS}
}

@book{Thomee,
  author={Thom\'{e}e, Vidar},
  title={Galerkin Finite Element Methods for Parabolic Problems},
  year={2006},
  edition={Second},
  series={Springer Series in Computational Mathematics},
  publisher={Springer}
}

@book{Mueller-Siltanen12,
  author={Mueller, Jennifer L. and Siltanen, Samuli},
  title={Linear and nonlinear inverse problems with practical applications},
  year={2012},
  number={32},
  series={Computational Science \& Engineering},
  publisher={SIAM}
}

@book{Vogel02,
  author={Vogel, Curtis R.},
  title={Computational Methods for Inverse Problems},
  year={2002},
  number={},
  series={Frontiers in Applied Mathematics},
  publisher={SIAM}
}

@book{Evans10,
  author={Evans, Lawrence C.},
  title={Partial Differential Equations},
  year={2010},
  edition={Second},
  volume={19},
  series={Graduate Studies in Mathematics},
  publisher={American Mathematical Society}
}

@book{Isakov2006,
  author={Isakov, Victor},
  title={Inverse Problems for Partial Differential Equations},
  year={2006},
  edition={Second},
  volume={27},
  series={Applied Mathematical Sciences},
  publisher={Springer}
}

@book{lewis2004fundamentals,
  title={Fundamentals of the finite element method for heat and fluid flow},
  author={Lewis, Roland W and Nithiarasu, Perumal and Seetharamu, Kankanhalli N},
  year={2004},
  publisher={John Wiley \& Sons}
}

@inproceedings{morozov1966solution,
  title={On the solution of functional equations by the method of regularization},
  author={Morozov, Vladimir Alekseevich},
  booktitle={Doklady Akademii Nauk},
  volume={167},
  pages={510--512},
  year={1966},
  organization={Russian Academy of Sciences}
}

@book{prilepko2000methods,
  title={Methods for solving inverse problems in mathematical physics},
  author={Prilepko, Aleksey I. and Orlovsky, Dmitry G. and Vasin, Igor A.},
  year={2000},
  publisher={MARCEL DEKKER, INC.},
  series={Monographs and textbooks in pure and applied mathematics}
}

@article{cannon1986diffusion,
  title={Diffusion subject to the specification of mass},
  author={Cannon, John R. and van der Hoek, John},
  journal={Journal of mathematical analysis and applications},
  volume={115},
  number={2},
  pages={517--529},
  year={1986},
  publisher={Elsevier}
}

@article{hasanov2013analysis,
  title={An analysis of inverse source problems with final time measured output data for the heat conduction equation: A semigroup approach},
  author={Hasanov, Alemdar and Slodi{\v{c}}ka, Mari{\'a}n},
  journal={Applied Mathematics Letters},
  volume={26},
  number={2},
  pages={207--214},
  year={2013},
  publisher={Elsevier}
}

@article{hazanee2016reconstruction,
  title={Reconstruction of multiplicative space-and time-dependent sources},
  author={Hazanee, A and Lesnic, D},
  journal={Inverse Problems in Science and Engineering},
  volume={24},
  number={9},
  pages={1528--1549},
  year={2016},
  publisher={Taylor \& Francis}
}

@article{hazanee2013reconstruction,
  title={Reconstruction of an additive space-and time-dependent heat source},
  author={Hazanee, A and Lesnic, D},
  journal={European Journal of Computational Mechanics},
  volume={22},
  number={5-6},
  pages={304--329},
  year={2013},
  publisher={Taylor \& Francis}
}

@book{EnglHankeNeubauer1996,
  author    = {Engl, Heinz W. and Hanke, Martin and Neubauer, Andreas},
  title     = {Regularization of Inverse Problems},
  publisher = {Kluwer Academic Publishers},
  year      = {1996},
  address   = {Dordrecht},
  isbn      = {978-0-7923-4157-4}
}

@book{Natterer2001,
  author    = {Natterer, Frank},
  title     = {The Mathematics of Computerized Tomography},
  publisher = {SIAM},
  year      = {2001},
  address   = {Philadelphia},
  isbn      = {978-0-89871-493-4}
}

@book{Lunardi1995,
  author    = {Lunardi, Alessandra},
  title     = {Analytic Semigroups and Optimal Regularity in Parabolic Problems},
  publisher = {Birkh{\"a}user},
  year      = {1995},
  address   = {Basel},
  isbn      = {978-3-7643-5178-1}
}

@article{Peskin2002,
  author  = {Peskin, Charles S.},
  title   = {The immersed boundary method},
  journal = {Acta Numerica},
  volume  = {11},
  pages   = {479--517},
  year    = {2002},
  doi     = {10.1017/S0962492902000077}
}

@article{ngoma2024well,
  title={Well-posedness and \large{T}ikhonov regularization of an inverse source problem for a parabolic equation with an integral constraint},
  author={Ngoma, Sedar},
  journal={Journal of Inverse and Ill-posed Problems},
  volume={32},
  number={5},
  pages={903--925},
  year={2024},
  publisher={De Gruyter}
}

\end{document}